\newcommand*{\Lemmaautorefname}{Lemma}
\newcommand*{\Theoremautorefname}{Theorem}
\newcommand*{\Corollaryautorefname}{Corollary}
\newcommand*{\Exampleautorefname}{Example}
\newcommand*{\Assumptionautorefname}{Assumption}
\numberwithin{equation}{section}
\theoremstyle{plain}
\newtheorem{definition}{Definition}[section]
\newtheorem{Theorem}{Theorem}[section]
\newtheorem{Corollary}{Corollary}[section] 
\newtheorem{Lemma}{Lemma}[section]
\newtheorem{remark}{Remark}[section] 
\newtheorem{mythm}{Proof of Theorem}[section]
\newtheorem{Example}{Example}[section]
\newtheorem{Assumption}{Assumption}[section]
\makeatletter \@addtoreset{equation}{section}
\begin{document}

\linespread{1.3}
\pagestyle{plain}


\title
{$\alpha$-scaled strong convergence of stochastic theta method for stochastic differential equations driven by time-changed Lévy noise beyond Lipschitz continuity}

\author{Jingwei Chen\footnotemark[2] \footnotemark[1]}

\renewcommand{\thefootnote}{\fnsymbol{footnote}}
\footnotetext[1]{Corresponding author. (\texttt{chenj22@mails.tsinghua.edu.cn})}
\footnotetext[2]{Yau Mathematical Sciences Center, 
Tsinghua University,
Beijing, 100084, China.}

\date{}

\maketitle
\begin{abstract}
This paper develops an $\alpha$-parametrized framework for analyzing the strong convergence of the stochastic theta (ST) method for stochastic differential equations driven by time-changed Lévy noise (TCSDEwLNs) with time-space-dependent coefficients satisfying local Lipschitz conditions. Properties of the inverse subordinator $E$ are investigated and explicit moment bounds for the exact solution are derived with jump rate incorporated. The analysis demonstrates that the ST method converges strongly with order of $min\{\eta_{F},\eta_{G},\eta_{H},\alpha/2\}$, establishing a precise relationship between numerical accuracy and the time-change mechanism. This theoretical advancement extends existing results and would facilitate applications in finance and biology where time-changed Lévy models are prevalent.



\medskip
\noindent \textbf{Keywords:} Time-changed stochastic differential equations, time-changed Lévy noise, strong convergence order, the stochastic theta (ST) method, local Lipschitz condition

\medskip
\noindent \textbf{AMS Subject Clasification:} 65C30,\;60H10

\end{abstract}



\section{Introduction}
Convergence analysis of the numerical methods is of great importance for SDEs since their analytical solutions may not be written out explicitly.
In recent years, several papers were devoted to the study of convergence of numerical schemes for SDEs driven by time-changed Brownian motion \cite{jum2016strong,liu2020truncated,deng2020semi,2019JinKobayashi,jin2021strong,jingwei2025}.
Among the literatures above, \cite{jingwei2025} is the first to establish an $\alpha$-sensitive framework for time-changed SDEs, proving the convergence order dynamically responds to the stability index $\alpha$ of the stable subordinator. 

This paper investigates the following SDEs driven by time-changed Lévy noise (TCSDEwLNs)
\begin{equation}
    X_{t}=x_{0}+\int_{0}^{t}F(s,X_{s-})dE_{s}+\int_{0}^{t}G(s,X_{s-})dB_{E_{s}}+\int_{0}^{t}\int_{|z|<c}H(s,X_{s-},z)N(dE_{s},dz),\forall t\in[0,T], \label {SDE}
\end{equation}
with the initial value $X_0=x_0\in\mathbb{R}^{d}$,
where
$c>0$,
$X_{t-}:=\underset{s\uparrow t}{lim}X_{s}$,
$T>0$ is a finite time horizon,
$F(t,x):[0,T]\times\mathbb{R}^{d}\rightarrow\mathbb{R}^{d}$,
$G(t,x):[0,T]\times\mathbb{R}^{d}\rightarrow\mathbb{R}^{d\times m}$
and
$H(t,x,z):[0,T]\times\mathbb{R}^{d}\times\mathbb{R}\rightarrow\mathbb{R}^{d}$
are measurable functions with
$F(t,0)=G(t,0)=H(t,0,0)=0$, 
and
$E=\left(E_{t}\right)_{t\geq0}$ is the inverse of an $\alpha$-stable subordinator $D=\left(D_{t}\right)_{t\geq0}$.

We outline the main contributions as follows.
\begin{itemize}
  \item To the best of our knowledge, the study on numerical methods of TCSDEwLNs is very few. We explore stochastic theta (ST) method for \eqref{SDE} when the coefficients satisfy the local Lipschitz condition. Since the coefficients are time-space-dependent, we prove convergence without using duality principle, see \cite{2019JinKobayashi}.
  \item We generalize the moment properties of inverse subordinators $E$ from integer orders in \cite{jingwei2025} to real orders.
  \item We generalize the moment properties of exact solution in \cite{jingwei2025} to include jump rate $\lambda$ as a parameter.
  \item We extend the $\alpha$-parameterized framework in \cite{jingwei2025} to time-changed Lévy noise case, and the strong convergence order is characterized by the Hölder exponent $\eta_H$ of the jump component.
\end{itemize}

\section{Mathematical Preliminaries}
In this paper, 
let $(\Omega,\mathcal{F},\mathbb{P})$ be a complete probability space with filtration $\{\mathcal{F}_{t}\}_{t\geq0}$ satisfying the usual conditions (i.e. right continuous and increasing while $\mathcal{F}_{0}$ contains all $\mathbb{P}$-null sets).
Let $B=\left(B_{t}\right)_{t\geq0}$ be a standard Brownian motion.
Let $D$ be an $\alpha$-stable subordinator with stability index $0<\alpha<1$. A subordinator is a one-dimensional nondecreasing Lévy process with càdlàg paths starting at 0 with Laplace transform
\begin{equation}
  \mathbb{E}[e^{-\xi D_t}] = e^{-t \psi(\xi)}, \xi>0, t\geq 0, \notag
\end{equation}
where the Laplace exponent $\psi:(0,\infty)\rightarrow(0,\infty)$ is $\psi(\xi) = \int_0^\infty (1 - e^{-\xi y}) \Pi(dy), \xi >0$
and the Lévy measure $\Pi$ satisfies $\int_{0}^{\infty}(y\wedge1)\Pi(dy)<\infty.$
We focus on the infinite Lévy measure case, i.e. $\Pi (0,\infty)=\infty$.  
Let $E=(E_{t})_{t\geq0}$ be the inverse of $D$, i.e.
\begin{equation}
    E_{t}:=inf\{u>0:D_{u}>t\},t\geq0. \notag
\end{equation}
If $D$ is a stable subordinator, then $E$ has Mittag-Leffler distributions, see \cite{meerschaert2004limit}.

Let $N$ be a $\mathcal{F}_{t}$-adapted Poisson random measure on $[0,T]\times(\mathbb{R}^{d}-\{0\})$ independent of $B$.
Define a compensated Poisson measure $\tilde{N}(dt,dz):=N(dt,dz)-\lambda \Pi(dz)dt$ 
where $0<\lambda <\infty$ is the jump rate. Let
\begin{equation}
   \int_{0}^{t}\int_{|z|<c}H(s,X_{s-},z)N(dE_{s},dz)=\int_{0}^{t}\int_{|z|<c}H(s,X_{s-},z)\tilde{N}(dE_{s},dz)+\lambda\int_{0}^{t}\int_{|z|<c}H(s,X_{s-},z)\Pi(dz)dE_{s}. \notag
\end{equation}

Let $\mathbb{E}_{B},\mathbb{E}_{D}$ and $\mathbb{E}$ denote the expectation under the probability measures $\mathbb{P}_{B}, \mathbb{P}_{D}$ and $\mathbb{P}$, respectively. 
Suppose $B$ and $E$ are mutually independent, then the product measure satisfies $\mathbb{P}=\mathbb{P}_{B}\times\mathbb{P}_{D}$.

In the content going forward, we assume $m=d=1$ for simplicity of discussions and expressions when necessary; an extension to multidimensional case is straightforward. We denote $C$ as generic positive constants that may change from line to line. We also propose the following assumptions:

\begin{Assumption}(Local Lipschitz Condition).\label{local Lipschitz}
  For every $R>0$, there exists a positive constant $C(R)$, depending only on $R$, such that for all $x,y\in\mathbb{R}^{d}$ and $t\geq 0$,
  \begin{equation}
    \left|F(t,x)-F(t,y)\right|+|G(t,x)-G(t,y)|+\int_{|z|<c}\left|H(t,x,z)-H(t,y,z)\right|\Pi(dz)\leq C(R)|x-y|, \notag
  \end{equation}
  whenever $|x|\vee|y|\leq R$.
\end{Assumption}

\begin{Assumption}\label{polynomial growth}
  There exists constants $C(h)>0$ and $C>0$ such that, for all $t\in[0,T]$ and $x\in\mathbb{R}^{d}$,
  \begin{align}
    \left|F(t,x)\right|\vee|G(t,x)|\leq C(h)\left(1+|x|^{h}\right), \notag
    &&\int_{|z|<c}|H(t,x,z)|^{2}\Pi(dz)\leq K_0\left(1+|x|^{2}\right). \notag
  \end{align}
\end{Assumption} 

\begin{Assumption}\label{monotone}
    There exists a constant $K_1>0$ such that, for all $t\in[0,T]$ and $x\in\mathbb{R}^{d}$,
    \begin{equation}
      \left\langle x,F(t,x)\right\rangle +\frac{2h-1}{2}|G(t,x)|^{2}\leq K_{1}(1+|x|^{2}).  \notag
    \end{equation}
\end{Assumption}

\begin{Assumption}\label{continuity}
  There exist constants $K_2,K_3,K_4>0$ and $\eta_F,\eta_G,\eta_H \in(0,1]$ such that, for all $s,t\in[0,T]$ and $x,y\in\mathbb{R}^{d}$,
  \begin{align} 
    |F(s,x)-F(t,x)|\leq K_2(|1+|x|)|s-t|^{\eta_{F}}, \notag
    &&|G(s,x)-G(t,x)|\leq K_3(|1+|x|)|s-t|^{\eta_{G}},
  \end{align}
  and
  \begin{equation}
    \int_{|z|<c}\left|H(s,x,z)-H(t,x,z)\right|\Pi(dz)\leq K_{4}(|1+|x|)|s-t|^{\eta_{H}}. \notag
  \end{equation}
\end{Assumption}

\begin{Assumption}\label{one-sided Lipschitz}
  There exist a constant $K_5$ such that, for all $s,t\in[0,T]$ and $x,y\in\mathbb{R}^{d}$,
  \begin{align} 
    \left\langle x-y,F((t,x)-F(t,y))\right\rangle \leq K_{5}|x-y|. \notag
  \end{align}
\end{Assumption}

The idea of the proof of the existence and uniqueness of the strong solution to \eqref{SDE} follows from \cite{Protter2004}, \cite{mao1991semimartingales} and Lemma 4.1 of \cite{2011Kobayashi}. The proof is generalized upon reformulating the operators in SDEs driven by semimartingales.

Fix an equidistant step size $\delta\in(0,1)$ and a time horizon $T>0$.
The sample path of the subordinator $D$ is simulated by setting $D_{0}=0$ and $D_{i\delta}:=D_{(i-1)\delta}+Z_{i},i=1,2,3,...,$ with an i.i.d. sequence $\left\{Z_{i}\right\}_{i\in\mathbb{N}}$ distributed as $Z_{i}\overset{d}{=}D_{\delta}.$
One may generate the random variables $Z_{i}$ via the Chambers-Mallows-Stuck (CMS) algorithm \cite{chambers1976method}.
We stop this procedure upon finding the integer $N$ satisfying $T\in[D_{N\delta},D_{(N+1)\delta}).$ 
Next, let 
\begin{equation}
  \tilde{E}_{t}:=\left(min\left\{ n\in\mathbb{N};D_{n\delta}>t\right\} -1\right)\delta,\forall t\in[0,T]. \label{Et}
\end{equation}
We can see that for $n=0,1,2,...,N$, $\tilde{E}_{t}=n\delta$ whenever $t\in[D_{n\delta},D_{(n+1)\delta})$.
Also $\tilde{E}_{t}$ efficiently approximates $E$, i.e. 
\begin{equation}
  E_{t}-\delta\leq\tilde{E}_{t}\leq E_{t},\forall t\in[0,T]. \label{approximates}
\end{equation}
For proofs, see \cite{jum2016strong}.
We can simply approximate $B\circ E$ over the time steps $\{0,\delta,2\delta,...,N\delta\}$.

In order to approximate the solution we consider the ST scheme. Let $\tau_{n}=D_{n\delta},n=0,1,2,...,N$. Given any random step size $\tau_{n+1}-\tau_n,$ define a discrete-time process $(\tilde{X}_{\tau_{n}})$ by setting
%
\begin{align}
    \tilde{X}_{\tau_{n+1}}	
    &=\tilde{X}_{\tau_{n}}+(1-\theta)F\left(\tau_{n},\tilde{X}_{\tau_{n}}\right)\delta+\theta F\left(\tau_{n+1},\tilde{X}_{\tau_{n+1}}\right)\delta+G\left(\tau_{n},\tilde{X}_{\tau_{n}}\right)\left(B_{(n+1)\delta}-B_{n\delta}\right) \notag \\
    & \quad +\int_{|z|<c}H\left(\tau_{n},\tilde{X}_{\tau_{n}},z\right)\tilde{N}\left(\delta,dz\right)+\lambda\int_{|z|<c}H\left(\tau_{n},\tilde{X}_{\tau_{n}},z\right)\Pi\left(dz\right)\delta, 	\label{discrete STM}
\end{align} 
with $\tilde{X}_{0}=x_{0}$, for $n=0,1,2,...,N-1$,
where $\theta \in[0,1]$. 

Now, we define a continuous-time process $\tilde{X}=\left(\tilde{X}_{t}\right)_{t\in[0,T]}$ by continuous interpolation; i.e. whenver $s\in[\tau_{n},\tau_{n+1}),$
\begin{align}
  \tilde{X}_{s}
  &:=\tilde{X}_{\tau_{n}}+(1-\theta)\int_{\tau_{n}}^{s}F\left(\tau_{n},\tilde{X}_{\tau_{n}}\right)dE_{r}+\theta\int_{\tau_{n}}^{s}F\left(\tau_{n+1},\tilde{X}_{\tau_{n+1}}\right)dE_{r}+\int_{\tau_{n}}^{s}G\left(\tau_{n},\tilde{X}_{\tau_{n}}\right)dB_{E_{r}} \notag \\
  &\quad +\int_{\tau_{n}}^{s}\int_{|z|<c}H\left(\tau_{n},\tilde{X}_{\tau_{n}},z\right)\tilde{N}(dE_{r},dz)+\lambda\int_{\tau_{n}}^{s}\int_{|z|<c}H\left(\tau_{n},\tilde{X}_{\tau_{n}},z\right)\Pi(dz)dE_{r}.
   \label{continuous STM1}
\end{align}
Let $n_{t}=max\left\{ n\in\mathbb{N}\cup\{0\};\tau_{n}\leq t\right\} ,t\geq0$.
Clearly, $\tau_{n_{t}}\leq t\leq\tau_{n_{t}+1}$ for any $t>0$.
By the identity $\tilde{X}_{s}-x_{0}=\sum_{i=0}^{n_{s}-1}\left(\tilde{X}_{\tau_{i+1}}-\tilde{X}_{\tau_{i}}\right)+\left(\tilde{X}_{s}-\tilde{X}_{\tau_{n_{s}}}\right)$, \eqref{continuous STM1} can be rewritten as
\begin{align}
  \tilde{X}_{s}
  &=x_0+(1-\theta)\int_{0}^{s}F\left(\tau_{n_{r}},\tilde{X}_{\tau_{n_{r}}}\right)dE_{r}+\theta\int_{0}^{s}F\left(\tau_{n_{r}+1},\tilde{X}_{\tau_{n_{r}+1}}\right)dE_{r}+\int_{0}^{s}G\left(\tau_{n_{r}},\tilde{X}_{\tau_{n_{r}}}\right)dB_{E_{r}} \notag \\
  &\quad +\int_{0}^{s}\int_{|z|<c}H\left(\tau_{n_{r}},\tilde{X}_{r-},z\right)\tilde{N}\left(dE_{r},dz\right)+\lambda\int_{0}^{s}\int_{|z|<c}H\left(\tau_{n_{r}},\tilde{X}_{r-},z\right)\Pi\left(dz\right)dE_{r}.    \label{continuous STM2}
\end{align}
Define the discrete Forward-Backward Euler-Maruyama (FBEM) scheme
\begin{align}
  \hat{X}_{\tau_{n+1}}
  &=\hat{X}_{\tau_{n}}+F\left(\tau_{n},\tilde{X}_{\tau_{n}}\right)\delta+G\left(\tau_{n},\tilde{X}_{\tau_{n}}\right)\left(B_{(n+1)\delta}-B_{n\delta}\right)+\int_{|z|<c}H\left(\tau_{n},\tilde{X}_{\tau_{n}},z\right)\tilde{N}(\delta,dz) \notag \\
  &\quad +\lambda\int_{|z|<c}H\left(\tau_{n},\tilde{X}_{\tau_{n}},z\right)\Pi(dz)\delta,
  \label{discrete FBEM}
\end{align}
where $\hat{X}_{\tau_{n}}=x_{0}$.
Then the continuous FBEM scheme is 
\begin{align}
  \hat{X}_{s}	
  &=\hat{x}_{0}+\int_{0}^{s}F\left(\tau_{n_{r}},\tilde{X}_{\tau_{n_{r}}}\right)dE_{r}+\int_{0}^{s}G\left(\tau_{n_{r}},\tilde{X}_{\tau_{n_{r}}}\right)dB_{E_{r}}+\int_{0}^{s}\int_{|z|<c}H\left(\tau_{n_{r}},\tilde{X}_{\tau_{n_{r}}-},z\right)\tilde{N}(dE_{r},dz) \notag \\
  &\quad +\int_{0}^{s}\int_{|z|<c}H\left(\tau_{n_{r}},\tilde{X}_{\tau_{n_{r}}-},z\right)\Pi(dz)dE_{r}.
  \label{continuous FBEM}
\end{align}


For each positive number $R$, we define the stopping time $\kappa_{R}:=inf\{t\geq0:|X_{t}|>R\}$, $\vartheta_{R}:=inf\left\{ n\geq0:\left|\tilde{X}_{\tau_{n}}\right|>R\right\}$, $v_{R}:=inf\{t\geq0:|\hat{X}_{t}|>R\ or\ |\tilde{X}_{t}|>R\}$ and $\rho_{R}=\kappa_{R}\wedge\nu_{R}$. Define $f:\mathbb{R}^{d}\rightarrow\mathbb{R}^{d}$ by
\begin{equation}
  f(x)=x-\theta F(t,x)\delta,\forall x\in\mathbb{R}^{d}. \label{f}
\end{equation}

\section{Main results} 

Unless specifically noted, the following theorems and lemmas are directly from or can be deduced following the same ideas in \cite{jingwei2025}.
\begin{Theorem}\label{Theorem:moment of inverse subordinator E}
  Let $E$ be the inverse of an $\alpha$-stable subordinator $D$ with $\alpha\in(0,1)$. Then for any $p\geq 0$ and $0\leq s<t$,
  \begin{equation}
    \mathbb{E}\left[E_{t}^{p}\right]=\frac{\Gamma(p+1)}{\Gamma(\alpha p+1)}t^{\alpha p}.
  \end{equation}
  Furthermore,
  \begin{equation}
    \mathbb{E}\left[\left|E_{t}-E_{s}\right|^{p}\right]=\frac{\Gamma(p+1)}{\Gamma(\alpha p+1)}|t-s|^{p\alpha}. \label{moment property of E-2}
  \end{equation}
\end{Theorem}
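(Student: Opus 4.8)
The plan is to prove the two identities in turn, obtaining the increment formula as a consequence of the single-time moment formula. For $\mathbb{E}[E_t^p]$ the route I would take is the double Laplace transform of the density of $E_t$. Let $f_{E_t}$ denote the density of the inverse stable subordinator at time $t$; the inverse-subordinator theory gives, with the Laplace exponent $\psi(\lambda)=\lambda^\alpha$ of the $\alpha$-stable $D$, the transform $\int_0^\infty e^{-\lambda t} f_{E_t}(x)\,dt = \frac{\psi(\lambda)}{\lambda}e^{-x\psi(\lambda)} = \lambda^{\alpha-1}e^{-x\lambda^\alpha}$. Multiplying by $x^p$ and integrating over $x\in(0,\infty)$ converts the $p$-th moment into a Gamma integral: $\int_0^\infty e^{-\lambda t}\mathbb{E}[E_t^p]\,dt = \lambda^{\alpha-1}\int_0^\infty x^p e^{-x\lambda^\alpha}\,dx = \Gamma(p+1)\lambda^{-1-\alpha p}$. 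Comparing this with $\int_0^\infty e^{-\lambda t} t^{\alpha p}\,dt = \Gamma(\alpha p+1)\lambda^{-1-\alpha p}$ and inverting, by uniqueness of Laplace transforms, yields $\mathbb{E}[E_t^p]=\frac{\Gamma(p+1)}{\Gamma(\alpha p+1)}t^{\alpha p}$. An equivalent shortcut is to invoke the self-similarity $E_t\overset{d}{=}t^\alpha E_1$, which follows from $D_{ct}\overset{d}{=}c^{1/\alpha}D_t$, together with the Mittag-Leffler moments $\mathbb{E}[E_1^p]=\Gamma(p+1)/\Gamma(\alpha p+1)$ already available from the Mittag-Leffler law of $E_1$.

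For the increment identity the plan is to reduce it to the first part through the distributional reduction $E_t-E_s\overset{d}{=}E_{t-s}$ for $0\le s<t$. Granting this, monotonicity of $E$ gives $|E_t-E_s|=E_t-E_s$, and applying the first identity with $t$ replaced by $t-s$ delivers $\mathbb{E}[|E_t-E_s|^p]=\frac{\Gamma(p+1)}{\Gamma(\alpha p+1)}|t-s|^{\alpha p}$. To argue the reduction I would decompose the increment using the strong Markov property of the subordinator $D$ at the first-passage level across $s$, expressing $E_t-E_s$ through the post-passage path of $D$ and its overshoot beyond $s$, and then invoke the self-similar scaling $E_{cu}\overset{d}{=}c^\alpha E_u$ inherited from $D$.

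The hard part will be exactly this increment step. Unlike the single-time law, the inverse stable subordinator has neither independent nor stationary increments, so the reduction $E_t-E_s\overset{d}{=}E_{t-s}$ cannot be read off from a naive renewal or regeneration argument and must be controlled through the joint law of $(E_s,E_t)$, equivalently through the overshoot distribution of $D$ across the level $s$. Pinning down both the exact constant $\Gamma(p+1)/\Gamma(\alpha p+1)$ and the exponent $|t-s|^{\alpha p}$ uniformly in $s,t$, obtaining the stated equality and not merely a bound, is where I expect the main difficulty to concentrate, and it is the step that genuinely separates the increment identity from the elementary single-time moment computation.
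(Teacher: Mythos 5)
For the single-time moment your double Laplace transform argument (and the self-similarity shortcut $E_t\overset{d}{=}t^{\alpha}E_1$ with Mittag-Leffler moments) is correct, and it is a genuinely different route from the paper's: the paper computes $\mathbb{E}[E_t^p]$ by evaluating the Mellin transform of the density $S_\alpha(x,t)$ (quoted from Proposition 3.2 of the cited inverse-subordinator paper) at $s=p+1$ and simplifying with Euler's reflection formula. Your version is more self-contained, needing only the standard identity $\int_0^\infty e^{-\lambda t}f_{E_t}(x)\,dt=\lambda^{\alpha-1}e^{-x\lambda^{\alpha}}$ and uniqueness of Laplace transforms; the paper's version buys brevity by outsourcing the transform formula. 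Either is acceptable for the first display.

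The increment identity is where the proposal breaks, and the failure is not a fixable technicality: the reduction $E_t-E_s\overset{d}{=}E_{t-s}$ you plan to establish is false, and in fact the equality \eqref{moment property of E-2} itself holds only when $s=0$. Your own overshoot decomposition shows why: the strong Markov property of $D$ at the passage time $E_s$ gives $E_t-E_s=\tilde{E}_{(t-s-O_s)^{+}}$, where $\tilde{E}$ is an independent copy and $O_s=D_{E_s}-s$ is the overshoot, which for a driftless stable subordinator (infinite L\'evy measure, no atoms of drift) is strictly positive almost surely, with the generalized arcsine law. Hence only the stochastic domination $E_t-E_s\le_{\mathrm{st}}\tilde{E}_{t-s}$ is available, yielding the \emph{inequality} $\mathbb{E}[(E_t-E_s)^p]\le\frac{\Gamma(p+1)}{\Gamma(\alpha p+1)}(t-s)^{\alpha p}$, which is the form this bound takes in the literature (e.g. Jum--Kobayashi) and is all that the downstream estimates such as \autoref{exact any two difference} actually need. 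A one-line check at $p=1$ settles the matter: since $E$ is nondecreasing, the first part of the theorem gives $\mathbb{E}[E_t-E_s]=(t^{\alpha}-s^{\alpha})/\Gamma(\alpha+1)$, which is strictly smaller than $(t-s)^{\alpha}/\Gamma(\alpha+1)$ for $0<s<t$ and $\alpha\in(0,1)$ by strict subadditivity of $u\mapsto u^{\alpha}$. So no analysis of the joint law of $(E_s,E_t)$ can deliver the stated equality; the honest outcomes of your plan are either the inequality version or the $s=0$ case. (For comparison, the paper offers no independent argument here either: it defers the second display entirely to its predecessor paper, where the correct reading is the upper bound.)
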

\begin{proof}
  Let $S_{\alpha}(x,t)$ be the density of $E$.
  By Proposition 3.2 of \cite{gupta2022inverse}, we have the Mellin transform of the inverse subordinator
  \begin{equation}
        \mathcal{M}\left\{ S_{\alpha}(x,t)\right\} (s)
        =\frac{1}{t^{(1-s)\alpha}}\frac{\Gamma(s)\Gamma((1-s)\alpha)sin((1-s)\alpha\pi)}{\pi}, \notag
  \end{equation}
  which is a analytic function for $\mathcal{R}e(s)>0$ and has simple poles at $s=-k,k=0,1,2....$
  Applying property of Mellin's transform $\mathbb{E}\left[X^{p}\right]=\mathcal{M}\left\{ S_{\alpha}(x,t)\right\}(p+1)$ and Euler's reflection formula $\Gamma(z)\Gamma(1-z)=\frac{\pi}{sin(\pi z)}$ gives
  \begin{equation}
    \mathbb{E}\left[E_{t}^{p}\right]
    =\int_{0}^{\infty}x^{p}S_{\alpha}(x,t)dx 
    =\frac{1}{t^{(1-s)\alpha}}\frac{\Gamma(p+1)\Gamma(-p\alpha)sin(-p\alpha\pi)}{\pi} 
    =\frac{\Gamma(p+1)}{\Gamma(\alpha p+1)}t^{\alpha p}.
  \end{equation}

  The proof of \eqref{moment property of E-2} follows from \cite{jingwei2025}.
\end{proof}

\begin{Theorem}\label{Theorem: exponential moments of powers of inverse subordinator E}
    Let $E$ be the inverse of an $\alpha$-stable subordinator $D$ with $\alpha\in(0,1). $
    Then for $\xi, t>0$,
    \begin{equation}
      \mathbb{E}\left[exp\left(\xi E_{t}\right)\right]=\sum_{k=0}^{\infty}\frac{(\xi t^{\alpha})^{k}}{\Gamma(\alpha k+1)}<\infty. \label{2.6}
    \end{equation}
    For $r>0$, 
    \begin{equation}
      \mathbb{E}\left[exp\left(\xi E_{t}^{r}\right)\right]
      =\sum_{k=0}^{\infty}\frac{\xi^{k}}{k!}\frac{\Gamma(rk+1)}{\Gamma(\alpha rk+1)}t^{\alpha rk}. \label{2.8}
    \end{equation}
    1. If $r<1/(1-\alpha)$, then $\mathbb{E}\left[exp\left(\xi E_{t}^{r}\right)\right]<\infty$.

    2. If $r>1/(1-\alpha)$, then $\mathbb{E}\left[exp\left(\xi E_{t}^{r}\right)\right]=\infty$.
\end{Theorem}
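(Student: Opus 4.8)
The plan is to first establish the series identity \eqref{2.8}, then deduce \eqref{2.6} as the special case $r=1$, and finally settle the convergence dichotomy via the ratio test.

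I would begin by expanding the exponential into its power series and integrating term by term. Since every summand is nonnegative, Tonelli's theorem permits interchanging summation and expectation:
\begin{equation}
  \mathbb{E}\left[\exp\left(\xi E_{t}^{r}\right)\right]=\mathbb{E}\left[\sum_{k=0}^{\infty}\frac{\xi^{k}E_{t}^{rk}}{k!}\right]=\sum_{k=0}^{\infty}\frac{\xi^{k}}{k!}\,\mathbb{E}\left[E_{t}^{rk}\right]. \notag
\end{equation}
Substituting the moment formula from \autoref{Theorem:moment of inverse subordinator E} with $p=rk$, namely $\mathbb{E}[E_{t}^{rk}]=\Gamma(rk+1)\,t^{\alpha rk}/\Gamma(\alpha rk+1)$, produces exactly \eqref{2.8}. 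For \eqref{2.6} I would set $r=1$; then $\Gamma(rk+1)=\Gamma(k+1)=k!$ cancels the factorial in the denominator, collapsing the series to $\sum_{k=0}^{\infty}(\xi t^{\alpha})^{k}/\Gamma(\alpha k+1)$, which is the Mittag-Leffler function $E_{\alpha}(\xi t^{\alpha})$. Since $E_{\alpha}$ is entire, this quantity is finite for every argument.

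For the convergence dichotomy (parts 1 and 2), I would apply the ratio test to the series in \eqref{2.8}. Writing $a_{k}$ for the $k$-th term, the consecutive ratio is
\begin{equation}
  \frac{a_{k+1}}{a_{k}}=\frac{\xi\,t^{\alpha r}}{k+1}\cdot\frac{\Gamma(rk+r+1)}{\Gamma(rk+1)}\cdot\frac{\Gamma(\alpha rk+1)}{\Gamma(\alpha rk+\alpha r+1)}. \notag
\end{equation}
Applying the standard asymptotic $\Gamma(x+a)/\Gamma(x)\sim x^{a}$ as $x\to\infty$ to each Gamma quotient yields
\begin{equation}
  \frac{a_{k+1}}{a_{k}}\sim\xi\,t^{\alpha r}\,\frac{r^{r}}{(\alpha r)^{\alpha r}}\,k^{\,r(1-\alpha)-1},\qquad k\to\infty. \notag
\end{equation}
The sign of the exponent $r(1-\alpha)-1$ is decisive: when $r<1/(1-\alpha)$ the ratio tends to $0<1$ and the series converges absolutely, giving $\mathbb{E}[\exp(\xi E_{t}^{r})]<\infty$; when $r>1/(1-\alpha)$ the ratio tends to $+\infty$ and the series diverges, so $\mathbb{E}[\exp(\xi E_{t}^{r})]=\infty$.

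The main obstacle is the careful asymptotic handling of the Gamma-function quotients: isolating the critical power $k^{r(1-\alpha)-1}$ requires applying the quotient asymptotics with the correct arguments and tracking the accompanying constant factor, so that the $k^{r}$ growth from the numerator, the $k^{-\alpha r}$ decay from the denominator, and the $k^{-1}$ from the factorial combine cleanly. The interchange of sum and expectation, by contrast, is immediate from nonnegativity. I note that the borderline case $r=1/(1-\alpha)$ lies outside the stated dichotomy and would require a sharper, subexponential analysis not captured by the leading-order ratio test.
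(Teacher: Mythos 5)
Your proof is correct: the Tonelli interchange together with the moment formula of \autoref{Theorem:moment of inverse subordinator E} applied at $p=rk$ gives \eqref{2.8} (hence \eqref{2.6} at $r=1$ as the entire Mittag--Leffler function), and the Gamma-ratio asymptotics $\Gamma(x+a)/\Gamma(x)\sim x^{a}$ correctly isolate the decisive exponent $r(1-\alpha)-1$ in the ratio test, settling both halves of the dichotomy. The paper itself offers no written proof of this theorem---it defers to \cite{jingwei2025}---but your argument is exactly the standard deduction intended there (series expansion, real-order moments, Stirling-type asymptotics), so it matches the paper's approach.
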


\begin{Theorem}\label{thm: exact pth moment, probability}
  Let Assumptions 2.2-2.3 hold.
  Then, for $p\geq 1$ and any $0\leq t<T$,
  \begin{equation}
    \mathbb{E}\left[\left|X_{t}\right|^{2h}\right]\leq2^{h-1}\sum_{k=0}^{\infty}\frac{\left(\left(2hK_{1}+\lambda K_{0}\right)t^{\alpha}\right)^{k}}{\Gamma(\alpha k+1)}\left(1+\mathbb{E}\left|x_{0}\right|^{2h}\right)
    <\infty, \notag
  \end{equation}
  and
  \begin{equation}
    \mathbb{P}\left(\kappa_{R}\leq T\right)\leq\frac{2^{h-1}}{R^{2h}}\sum_{k=0}^{\infty}\frac{\left(\left(2hK_{1}+\lambda K_{0}\right)t^{\alpha}\right)^{k}}{\Gamma(\alpha k+1)}\left(1+\mathbb{E}\left|x_{0}\right|^{2h}\right). \notag
  \end{equation}
  Furthermore, let $Y_{t}^{(h)}:=1+\underset{0\leq s\leq t}{sup}|X_{s}|^{h}$.
  Then $\mathbb{E}[Y_{t}^{(h)}]<\infty.$
\end{Theorem}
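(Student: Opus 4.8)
The plan is to apply the It\^o formula for time-changed semimartingales to $V(x)=|x|^{2h}$, localise by the stopping time $\kappa_R$ so that the martingale parts have zero expectation, bound the remaining finite-variation terms through Assumptions~\ref{polynomial growth}--\ref{monotone}, and finally close the estimate by conditioning on the path of $E$ and running a Gr\"onwall argument whose $\mathbb{P}_D$-expectation reproduces the Mittag-Leffler series of \eqref{2.6}. With $V'(x)=2h|x|^{2h-2}x$ and $V''(x)=2h(2h-1)|x|^{2h-2}$, I would first write
\begin{align}
  |X_{t\wedge\kappa_R}|^{2h}
  &=|x_0|^{2h}+\int_0^{t\wedge\kappa_R}\!\Big(V'(X_{s-})F(s,X_{s-})+\tfrac12 V''(X_{s-})|G(s,X_{s-})|^2\Big)dE_s \notag \\
  &\quad+\int_0^{t\wedge\kappa_R}\!V'(X_{s-})G(s,X_{s-})\,dB_{E_s}+\int_0^{t\wedge\kappa_R}\!\int_{|z|<c}\big(V(X_{s-}+H)-V(X_{s-})\big)\tilde N(dE_s,dz) \notag \\
  &\quad+\lambda\int_0^{t\wedge\kappa_R}\!\int_{|z|<c}\big(V(X_{s-}+H)-V(X_{s-})-V'(X_{s-})H\big)\Pi(dz)\,dE_s, \notag
\end{align}
using the compensated representation of the jump integral introduced in the preliminaries. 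The two stochastic integrals are martingales relative to the time-changed filtration $\mathcal F_{E_t}$, so the localisation by $\kappa_R$ (which keeps the integrands bounded) guarantees that their expectations vanish.

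Taking expectations, the task reduces to a pointwise bound on the finite-variation integrand. Combining the drift and the diffusion correction gives $V'(x)F+\tfrac12V''(x)|G|^2=2h|x|^{2h-2}\big(\langle x,F\rangle+\tfrac{2h-1}{2}|G|^2\big)$, which by Assumption~\ref{monotone} is at most $2hK_1|x|^{2h-2}(1+|x|^2)$. For the jump compensator a second-order Taylor expansion yields $V(x+H)-V(x)-V'(x)H=\tfrac12V''(\xi)|H|^2$ for some $\xi$ on the segment joining $x$ and $x+H$; bounding $|\xi|^{2h-2}$ and invoking $\int_{|z|<c}|H|^2\Pi(dz)\le K_0(1+|x|^2)$ from Assumption~\ref{polynomial growth} produces the $\lambda K_0$ contribution. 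Absorbing the lower-order powers through $|x|^{2h-2}(1+|x|^2)\le C(1+|x|^{2h})$ (the source of the prefactor $2^{h-1}$), I would arrive at
\begin{equation}
  \mathbb{E}\big[1+|X_{t\wedge\kappa_R}|^{2h}\big]\le\big(1+\mathbb{E}|x_0|^{2h}\big)+(2hK_1+\lambda K_0)\,\mathbb{E}\!\int_0^{t\wedge\kappa_R}\!\big(1+|X_{s-}|^{2h}\big)\,dE_s. \notag
\end{equation}

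To close this I would condition on $E$, which is independent of $B$ and $N$. Under $\mathbb{E}_B$ the integrator $E$ is deterministic, the martingale terms still vanish, and Fubini turns the inequality into $\psi(t)\le(1+\mathbb{E}|x_0|^{2h})+(2hK_1+\lambda K_0)\int_0^t\psi(s)\,dE_s$ with $\psi(t):=\mathbb{E}_B[1+|X_{t\wedge\kappa_R}|^{2h}]$. Since $E$ is continuous and nondecreasing, the Gr\"onwall inequality read in the clock $E_t$ gives $\psi(t)\le(1+\mathbb{E}|x_0|^{2h})\exp\big((2hK_1+\lambda K_0)E_t\big)$, and taking $\mathbb{E}_D$ together with \eqref{2.6} reproduces $\sum_{k\ge0}((2hK_1+\lambda K_0)t^\alpha)^k/\Gamma(\alpha k+1)$, which is finite by Theorem~\ref{Theorem: exponential moments of powers of inverse subordinator E}. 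Letting $R\to\infty$ via Fatou's lemma yields the moment bound; the probability estimate then follows from Chebyshev's inequality, since $|X_{\kappa_R}|\ge R$ on $\{\kappa_R\le T\}$ gives $\mathbb{P}(\kappa_R\le T)\le R^{-2h}\,\mathbb{E}|X_{T\wedge\kappa_R}|^{2h}$. Finally, $\mathbb{E}[Y_t^{(h)}]<\infty$ would follow by keeping the supremum over $[0,t]$ before taking expectations, estimating the martingale suprema with the Burkholder--Davis--Gundy inequality, and repeating the Gr\"onwall--Mittag-Leffler step. I expect the main obstacle to be the jump compensator: controlling the remainder $V(x+H)-V(x)-V'(x)H$ for general $h>1$ solely through $\int_{|z|<c}|H|^2\Pi(dz)$ so as to recover the clean constant $\lambda K_0$, correctly absorbing the first-order jump term into the drift governed by Assumption~\ref{monotone}, and justifying the conditional Gr\"onwall step uniformly in the localisation radius $R$.
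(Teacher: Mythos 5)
Your overall architecture coincides with the paper's: apply the time-changed It\^o formula, localise by $\kappa_R$, annihilate the martingale terms under $\mathbb{E}_B$ (conditioning on the path of $E$), run a Gr\"onwall inequality in the clock $E_t$, take $\mathbb{E}_D$ to produce the Mittag--Leffler series of Theorem~\ref{Theorem: exponential moments of powers of inverse subordinator E}, and finish with Chebyshev. The substantive difference is your choice of Lyapunov function, $V(x)=|x|^{2h}$ together with the compensated jump representation, versus the paper's $(1+|x|^2)^h$ with the raw measure $N$; and this difference creates two concrete problems in your execution.

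First, your It\^o expansion drops a term. Once the jump integral is rewritten in compensated form, the finite-variation drift of $X$ is $F+\lambda\int_{|z|<c}H\,\Pi(dz)$, not $F$; equivalently, alongside your compensator with the second-order remainder $V(x+H)-V(x)-V'(x)H$ you must carry the first-order piece $\lambda\int_0^{t}\!\int_{|z|<c}V'(X_{s-})H\,\Pi(dz)\,dE_s$. Adding it back turns your compensator into the plain difference $\lambda\int\!\int\left[V(X_{s-}+H)-V(X_{s-})\right]\Pi(dz)\,dE_s$, which is the form the paper uses. This omission is not cosmetic: the missing term is of size $|x|^{2h-1}\int_{|z|<c}|H|\,\Pi(dz)$, and Assumption 2.2 controls only $\int_{|z|<c}|H|^2\,\Pi(dz)$, so it cannot be bounded separately, and the Taylor-remainder route cannot by itself deliver the clean constant $\lambda K_0$. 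You name this as an ``obstacle'' but your proposal does not resolve it (admittedly, the paper's own one-line estimate of its jump compensator also glosses over the cross term $\langle x,H\rangle$, but at least its written It\^o formula is the correct one). Second, the constants of the statement are not recovered by your $V$. With $(1+|x|^2)^h$ the drift/diffusion estimate is lossless, $2h(1+|x|^2)^{h-1}\bigl(\langle x,F\rangle+\tfrac{2h-1}{2}|G|^2\bigr)\le 2hK_1(1+|x|^2)^h$, and the prefactor $2^{h-1}$ comes from the initial value via $(1+|x_0|^2)^h\le 2^{h-1}(1+|x_0|^{2h})$ --- not, as you claim, from absorbing $|x|^{2h-2}(1+|x|^2)$ into $1+|x|^{2h}$. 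With your $V$ that absorption costs an extra factor (e.g. $|x|^{2h-2}(1+|x|^2)\le 2(1+|x|^{2h})$ for $h\ge1$), so your Gr\"onwall rate exceeds $2hK_1+\lambda K_0$ and the theorem's bound, whose exponential rate is exactly $2hK_1+\lambda K_0$, does not follow. Switching to the paper's Lyapunov function repairs both points at once; the remainder of your plan (conditional Gr\"onwall, Fatou in $R$, Chebyshev, and BDG for the supremum moment) is sound and matches the paper.
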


\begin{proof}
  Using the Itô's formula for time-changed Lévy noise (see Lemma 2.6 of \cite{nane2018}), we have
  \begin{align}
    \left(1+\left|X_{t}\right|^{2}\right)^{h}
    &\leq 2^{h-1}\left(1+\left|x_{0}\right|^{p}\right)+2h\int_{0}^{t}\left[1+\left|X_{s}\right|^{2}\right]^{h-1}\left(\left\langle X_{s},F\left(s,X_{s-}\right)\right\rangle +\frac{2h-1}{2}\left|G\left(s,X_{s-}\right)\right|^{2}\right)dE_{s} \notag \\
    &\quad +2h\int_{0}^{t}\left(1+\left|X_{s}\right|^{2}\right)^{h-1}\left\langle X_{s},G\left(s,X_{s-}\right)\right\rangle dB_{E_{s}} \notag \\
    &\quad +\lambda\int_{0}^{t}\int_{|z|<c}\left[\left(1+\left|X_{s}+H\left(s,X_{s},z\right)\right|^{2}\right)^{h}-\left(1+\left|X_{s}\right|^{2}\right)^{h}\right]\Pi(dz)dE_{s} \notag \\
    &\quad +\int_{0}^{t}\int_{|z|<c}\left[\left(1+\left|X_{s-}+H\left(s,X_{s-},z\right)\right|^{2}\right)^{h}-\left(1+\left|X_{s-}\right|^{2}\right)^{h}\right]\tilde{N}(dE_{s},dz).\notag
  \end{align}
  Applying \autoref{polynomial growth} and \autoref{monotone}, we have
  \begin{align}
    \left(1+\left|X_{t}\right|^{2}\right)^{h}
    &\leq 2^{h-1}\left(1+\left|x_{0}\right|^{p}\right)+\left(2hK_{1}+\lambda K_{0}\right)\int_{0}^{t}\left(1+\left|X_{s}\right|^{2}\right)^{h}dE_{s} \notag \\
    &\quad +2h\int_{0}^{t}\left(1+\left|X_{s}\right|^{2}\right)^{h-1}\left\langle X_{s},G\left(s,X_{s}\right)\right\rangle dB_{E_{s}} \notag \\
    &\quad +\int_{0}^{t}\int_{|z|<c}\left[\left(1+\left|X_{s-}+H\left(s,X_{s-},z\right)\right|^{2}\right)^{h}-\left(1+\left|X_{s-}\right|^{2}\right)^{h}\right]\tilde{N}(dE_{s},dz). \notag
  \end{align}
  Taking $\mathbb{E}_{B}$ on both sides, the martingale terms $dB_{E_s}$ and $\tilde{N}(dE_{s},dz)$ are annihilated:
  \begin{align}
    \mathbb{E}_{B}\left[\left(1+\left|X_{t\wedge\kappa_{R}}\right|^{2}\right)^{h}\right]
    &\leq 2^{h-1}\left(1+\left|x_{0}\right|^{2h}\right)+\left(2hK_{1}+\lambda K_{0}\right)\mathbb{E}_{B}\left[\int_{0}^{t}\left(1+\left|X_{s\wedge\kappa_{R}}\right|^{2}\right)^{h}dE_{s}\right]. \notag  
  \end{align}
  By the Gronwall-type inequality (Lemma 6.3 in Chapter IX.6a of \cite{Jacod2003}), taking $\mathbb{E}_{D}$ on both sides, letting $R\rightarrow \infty$ gives
  \begin{equation}
    \mathbb{E}\left[\left(1+\left|X_{t}\right|^{2}\right)^{h}\right]\leq2^{h-1}\left(1+\mathbb{E}\left|x_{0}\right|^{p}\right)\mathbb{E}\left[e^{\left(2hK_{1}+\lambda K_{0}\right)E_{t}}\right]. \notag
  \end{equation}
  Finally, applying \autoref{Theorem: exponential moments of powers of inverse subordinator E}, the theorem is then proved.
\end{proof}

\begin{Lemma}\label{exact any two difference}
  Let \autoref{polynomial growth} hold. 
  Then for $h \geq 1$ and any $0\leq s\leq t$, 
  \begin{equation}
    \mathbb{E}\left[\left|X_{t}-X_{s}\right|\right]
    \leq C(h,\lambda,K_{0})\mathbb{E}\left[Y_{t}^{(2h)}\right]^{1/2}\left\{ \left|t-s\right|^{\alpha}+\left|t-s\right|^{\alpha/2}\right\} , \notag
  \end{equation}
  where $Y_{t}^{(2h)}$ is defined in \autoref{thm: exact pth moment, probability}.
\end{Lemma}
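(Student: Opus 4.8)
The plan is to write $X_t-X_s$ as the sum of its four constituent increments and estimate each in $L^1(\mathbb{P})$ separately. For $0\le s\le t$,
\[
X_t-X_s=\int_s^t F(r,X_{r-})\,dE_r+\int_s^t G(r,X_{r-})\,dB_{E_r}+\int_s^t\!\!\int_{|z|<c}H(r,X_{r-},z)\,\tilde N(dE_r,dz)+\lambda\int_s^t\!\!\int_{|z|<c}H(r,X_{r-},z)\,\Pi(dz)\,dE_r,
\]
and I apply the triangle inequality, writing $I_1,I_2,I_3,I_4$ for the four integrals in the order displayed. The two pathwise (finite-variation in $E$) terms, the drift $I_1$ and the jump-drift $I_4$, will each contribute the rate $|t-s|^{\alpha}$, while the two martingale terms, the Brownian term $I_2$ and the compensated-jump term $I_3$, will each contribute $|t-s|^{\alpha/2}$; adding the four bounds yields the claim with $C=C(h,\lambda,K_0)$.

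For the drift term I would bound the integrand by \autoref{polynomial growth}, $|F(r,X_{r-})|\le C(h)(1+|X_{r-}|^h)\le C(h)Y_t^{(h)}$, and use that $E$ is non-decreasing, so that $|I_1|\le C(h)Y_t^{(h)}(E_t-E_s)$ pathwise. The elementary inequality $(1+a^h)\le\sqrt2\,(1+a^{2h})^{1/2}$ gives $Y_t^{(h)}\le\sqrt2\,(Y_t^{(2h)})^{1/2}$, whence $|I_1|\le C(h)(Y_t^{(2h)})^{1/2}(E_t-E_s)$. Taking expectations and applying the Cauchy--Schwarz inequality together with the case $p=2$ of \autoref{Theorem:moment of inverse subordinator E}, namely $\mathbb{E}[(E_t-E_s)^2]=\tfrac{2}{\Gamma(2\alpha+1)}|t-s|^{2\alpha}$, produces $\mathbb{E}|I_1|\le C(h)\mathbb{E}[Y_t^{(2h)}]^{1/2}|t-s|^{\alpha}$. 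The jump-drift term $I_4$ is treated identically once the linear growth $\int_{|z|<c}|H(r,X_{r-},z)|\,\Pi(dz)\le C(1+|X_{r-}|)$ is extracted from \autoref{local Lipschitz} and \autoref{polynomial growth}; this again yields a $|t-s|^{\alpha}$ contribution carrying the factor $\lambda$.

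For the two martingale terms the crucial device is the time-changed Burkholder--Davis--Gundy inequality at exponent one. For $I_2$ this gives $\mathbb{E}|I_2|\le C\,\mathbb{E}\big[(\int_s^t|G(r,X_{r-})|^2\,dE_r)^{1/2}\big]$; bounding $|G|^2\le C(1+|X_{r-}|^{2h})\le CY_t^{(2h)}$ and using monotonicity of $E$ gives $\int_s^t|G|^2\,dE_r\le CY_t^{(2h)}(E_t-E_s)$, so that, taking the square root \emph{inside} the expectation, $\mathbb{E}|I_2|\le C\,\mathbb{E}[(Y_t^{(2h)})^{1/2}(E_t-E_s)^{1/2}]$. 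A final Cauchy--Schwarz split and the case $p=1$ of \autoref{Theorem:moment of inverse subordinator E}, namely $\mathbb{E}[E_t-E_s]=\tfrac{1}{\Gamma(\alpha+1)}|t-s|^{\alpha}$, give $\mathbb{E}|I_2|\le C\,\mathbb{E}[Y_t^{(2h)}]^{1/2}|t-s|^{\alpha/2}$. The compensated-jump term $I_3$ is handled the same way, its predictable bracket being $\lambda\int_s^t\int_{|z|<c}|H|^2\,\Pi(dz)\,dE_r$, which \autoref{polynomial growth} bounds by $\lambda K_0\,Y_t^{(2)}(E_t-E_s)\le C\lambda K_0\,Y_t^{(2h)}(E_t-E_s)$; this is the source of the constant's dependence on $\lambda$ and $K_0$ and of the second $|t-s|^{\alpha/2}$ term.

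The main obstacle is \emph{moment economy}: the bound must involve only $\mathbb{E}[Y_t^{(2h)}]^{1/2}$, which is finite by \autoref{thm: exact pth moment, probability}, and not any higher moment of the supremum of the solution. A naive application of $L^1\le(L^2)^{1/2}$ followed by Cauchy--Schwarz would force the appearance of $\mathbb{E}[(Y_t^{(2h)})^2]^{1/2}$, i.e.\ a $4h$-th moment, which is not furnished by the hypotheses. This is circumvented precisely by invoking BDG at exponent one and pulling the square root inside the expectation (using concavity of $\sqrt{\cdot}$) before splitting off the $E$-increment by Cauchy--Schwarz, so that $Y_t^{(2h)}$ enters only at power $1/2$. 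A secondary point is the justification of the time-changed It\^o isometry and BDG estimates, which rests on the independence of $(B,N)$ from the subordinator $D$ and the martingale characterization of integrals against $B_E$ and $\tilde N(dE,\cdot)$; granting these, summing the four estimates delivers the stated inequality.
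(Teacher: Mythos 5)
Your decomposition, your rate assignment (drift and jump-drift at $|t-s|^{\alpha}$, the two martingale terms at $|t-s|^{\alpha/2}$), and the final Cauchy--Schwarz against the moments of $E_t-E_s$ from \autoref{Theorem:moment of inverse subordinator E} all coincide with the paper's proof. The genuine difference is where the expectation is split. The paper exploits the product structure $\mathbb{P}=\mathbb{P}_B\times\mathbb{P}_D$: it first takes $\mathbb{E}_B$ (so that $E$ is frozen and deterministic), bounds the Brownian and compensated-jump integrals by $L^1\le L^2$ together with the conditional It\^o isometry, obtaining bounds of the form $(E_t-E_s)^{1/2}\,\mathbb{E}_B\big[Y_t^{(2h)}\big]^{1/2}$ that are still random in $D$, and only then takes $\mathbb{E}_D$ with a Cauchy--Schwarz under $\mathbb{P}_D$. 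This conditioning already achieves the ``moment economy'' you worry about: since the $L^2$ step happens only under $\mathbb{E}_B$, squaring $\mathbb{E}_B\big[Y_t^{(2h)}\big]^{1/2}$ in the outer Cauchy--Schwarz returns $\mathbb{E}\big[Y_t^{(2h)}\big]$, never a $4h$-th moment. Your alternative --- BDG at exponent one under the full measure, pulling the square root inside --- reaches the same bounds, and for the Brownian term it is perfectly sound, since there the quadratic variation and the predictable bracket agree.

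Two concrete weak points remain. First, for the compensated-jump term $I_3$, BDG at $p=1$ controls $\mathbb{E}\big[\sup|I_3|\big]$ by $\mathbb{E}\big[[M]^{1/2}\big]$, where $[M]=\int\int|H|^2\,N(dE_r,dz)$ is the optional quadratic variation, \emph{not} the predictable bracket $\lambda\int\int|H|^2\,\Pi(dz)\,dE_r$ that you invoke; passing from $\mathbb{E}\big[[M]^{1/2}\big]$ to $\mathbb{E}\big[\langle M\rangle^{1/2}\big]$ is not automatic for discontinuous martingales and needs a domination argument (Lenglart's inequality for concave moderate functions) --- or else the paper's conditional-isometry route, which sidesteps the issue entirely. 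Second, your estimate of the jump-drift $I_4$ rests on $\int_{|z|<c}|H(r,x,z)|\,\Pi(dz)\le C(1+|x|)$, which does not follow from the stated hypotheses: \autoref{polynomial growth} is an $L^2(\Pi)$ bound, and since $\Pi$ has infinite mass near the origin this does not imply any $L^1(\Pi)$ control, while \autoref{local Lipschitz} is only local (constant $C(R)$) and controls differences, not $H(r,0,z)$ itself. The constant $C(h,\lambda,K_0)$ in the lemma signals that the intended tool is the $K_0$-bound of \autoref{polynomial growth}; you should either keep $H$ in $L^2(\Pi)$ form throughout, as the paper does via the isometry (admittedly the paper's own treatment of this fourth term is terse), or state the missing $L^1(\Pi)$ linear-growth condition explicitly as an additional assumption.
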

\begin{proof}
  Applying the Cauchy Schwarz inequality and \autoref{polynomial growth}, we have
  \begin{align}
    \mathbb{E}_{B}\left[\left|X_{t}-X_{s}\right|\right]
    &\leq \mathbb{E}_{B}\left[\int_{s}^{t}\left|F(r,X_{r-})\right|dE_{r}\right]+\mathbb{E}_{B}\left[\left|\int_{s}^{t}G(r,X_{r-})dB_{E_{r}}\right|^{2}\right]^{1/2} \notag \\
    &\quad +\mathbb{E}_{B}\left|\int_{s}^{t}\int_{|z|<c}H(r,X_{r-},z)\tilde{N}(dE_{r},dz)\right|+\lambda\mathbb{E}_{B}\left[\left|\int_{s}^{t}\int_{|z|<c}H(r,X_{r-},z)\Pi(dz)dE_{r}\right|^{2}\right]^{1/2} \notag \\
    &\leq C(h)\left(E_{t}-E_{s}\right)\mathbb{E}_{B}\left[Y_{t}^{(h)}\right]+\sqrt{2}C(h)\left(E_{t}-E_{s}\right)^{1/2}\mathbb{E}_{B}\left[Y_{t}^{(2h)}\right]^{1/2} \notag \\
    &\quad +\lambda K_{0}\left(E_{t}-E_{s}\right)^{1/2}\mathbb{E}_{B}\left[Y_{t}^{(2)}\right]^{1/2}. \notag
  \end{align}
  Taking $\mathbb{E}_{D}$ on both sides and using \autoref{Theorem:moment of inverse subordinator E}, we thus have
  \begin{equation}
    \mathbb{E}\left[\left|X_{t}-X_{s}\right|\right]
    \leq C(h,\lambda,K_{0})\mathbb{E}\left[Y_{t}^{(2h)}\right]^{1/2}\left\{ \left|t-s\right|^{\alpha}+\left|t-s\right|^{\alpha/2}\right\}. \notag
  \end{equation}

\end{proof}

\begin{Lemma}\label{f(x)=b existence and uniqueness}
  Let \autoref{monotone} and \autoref{one-sided Lipschitz} hold.
  Let $f$ be defined by \eqref{f}. 
  Then,  for any $b\in\mathbb{R}^{d}$ and $\delta<\delta^{*}=min\left\{1,\frac{1}{K_1\theta}, \frac{1}{K_5\theta}\right\}$, there exists a unique $x\in\mathbb{R}^{d}$ such that
  \begin{equation}
    f(x)=b, \notag
  \end{equation}
  and 
  \begin{equation}
    |x|^{2} \leq \frac{|f(x)|^{2}+2K_{1}\theta\delta}{(1-2K_{1}\theta\delta)}. \notag
  \end{equation}
\end{Lemma}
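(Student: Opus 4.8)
The plan is to handle existence, uniqueness, and the a priori bound separately, using the two hypotheses for distinct purposes: \autoref{one-sided Lipschitz} supplies monotonicity of $f$ (hence uniqueness), while \autoref{monotone} drives both the coercivity needed for solvability and the final quantitative estimate. I would fix $t$ throughout and treat $f(x)=x-\theta F(t,x)\delta$ as a map on $\mathbb{R}^d$.

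First I would establish that $f$ is uniformly monotone. Expanding the inner product and using the definition of $f$,
\[
\langle f(x)-f(y),\,x-y\rangle
= |x-y|^{2} - \theta\delta\,\langle F(t,x)-F(t,y),\,x-y\rangle
\geq (1-K_{5}\theta\delta)\,|x-y|^{2},
\]
where the inequality is \autoref{one-sided Lipschitz}. Since $\delta<\delta^{*}\leq 1/(K_{5}\theta)$, the factor $1-K_{5}\theta\delta$ is strictly positive, so $f$ is uniformly monotone; in particular $f(x)=f(y)$ forces $x=y$, which is uniqueness.

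For existence I would verify coercivity and then invoke the classical surjectivity theorem for continuous, coercive, monotone operators on $\mathbb{R}^{d}$ (Browder--Minty; in finite dimensions a Brouwer-degree argument). Dropping the nonnegative $G$-term in \autoref{monotone} gives $\langle x,F(t,x)\rangle\leq K_{1}(1+|x|^{2})$, whence
\[
\langle f(x),\,x\rangle = |x|^{2} - \theta\delta\,\langle F(t,x),\,x\rangle
\geq (1-K_{1}\theta\delta)\,|x|^{2} - K_{1}\theta\delta .
\]
Because $\delta<1/(K_{1}\theta)$, the coefficient $1-K_{1}\theta\delta$ is positive, so $\langle f(x),x\rangle/|x|\to\infty$ as $|x|\to\infty$, i.e. $f$ is coercive. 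As $F(t,\cdot)$ is continuous (being locally Lipschitz by \autoref{local Lipschitz}), $f$ is continuous, monotone, and coercive, and the cited theorem yields solvability of $f(x)=b$ for every $b\in\mathbb{R}^{d}$; together with the uniqueness above this gives the unique $x$. For the bound I would expand $|f(x)|^{2}$ directly rather than through the solved equation: from $f(x)=x-\theta\delta F(t,x)$,
\[
|f(x)|^{2} = |x|^{2} - 2\theta\delta\,\langle x,F(t,x)\rangle + \theta^{2}\delta^{2}|F(t,x)|^{2}
\geq |x|^{2} - 2\theta\delta\,\langle x,F(t,x)\rangle ,
\]
and inserting $\langle x,F(t,x)\rangle\leq K_{1}(1+|x|^{2})$ gives $|f(x)|^{2}\geq (1-2K_{1}\theta\delta)|x|^{2}-2K_{1}\theta\delta$, which rearranges to the claimed $|x|^{2}\leq\bigl(|f(x)|^{2}+2K_{1}\theta\delta\bigr)/(1-2K_{1}\theta\delta)$.

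The main obstacle is the existence half: since $F$ is only locally Lipschitz, one cannot make $f$ a global contraction and run a Banach fixed-point argument, so solvability of $f(x)=b$ must instead be extracted from the monotone-operator machinery, with the threshold $\delta^{*}$ calibrated precisely so that the monotonicity factor $1-K_{5}\theta\delta$ and the coercivity factor $1-K_{1}\theta\delta$ both remain positive. I would also flag the one bookkeeping point that the displayed estimate genuinely needs $1-2K_{1}\theta\delta>0$, i.e. the sharper restriction $\delta<1/(2K_{1}\theta)$, which the statement should incorporate into $\delta^{*}$.
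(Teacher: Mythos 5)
Your proof is correct in substance and, importantly, is self-contained, which matters here because the paper itself gives no argument for this lemma: it is stated under the blanket remark that the results of Section 3 ``are directly from or can be deduced following the same ideas in'' the cited earlier work, so there is no in-paper proof to compare against. Your route --- uniform monotonicity of $f$ from \autoref{one-sided Lipschitz} for uniqueness, coercivity from \autoref{monotone} (after discarding the nonnegative $G$-term) combined with the finite-dimensional surjectivity theorem for continuous coercive maps for existence, and a direct expansion of $|f(x)|^{2}$ for the quantitative bound --- is exactly the standard Mao--Szpruch-type argument used for solvability of implicit (theta) schemes under one-sided Lipschitz conditions, so in all likelihood it coincides with the proof the paper is implicitly importing.

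Two points deserve emphasis. First, you silently use \autoref{one-sided Lipschitz} in the form $\langle x-y,\,F(t,x)-F(t,y)\rangle \leq K_{5}|x-y|^{2}$; as printed in the paper the right-hand side is $K_{5}|x-y|$ (first power), under which your monotonicity display fails and uniqueness cannot be extracted --- from $f(x)=f(y)$ one would only get $|x-y|\leq K_{5}\theta\delta$. The squared form is clearly what is intended (otherwise the threshold $\frac{1}{K_{5}\theta}$ in $\delta^{*}$ serves no purpose), but you should state explicitly that you are reading the assumption this way. Second, your flag about the denominator is correct and is corroborated by the paper itself: the displayed bound genuinely requires $1-2K_{1}\theta\delta>0$, i.e. $\delta<\frac{1}{2K_{1}\theta}$, and the subsequent results (\autoref{thm: discrete STM, 2nd moment bound, sup} through \autoref{thm: FBEM-STM probability}) all state $\delta^{*}=\min\left\{1,\frac{1}{2K_{1}\theta},\frac{1}{K_{5}\theta}\right\}$, so the $\frac{1}{K_{1}\theta}$ appearing in this lemma's hypothesis is evidently a typo that your argument correctly forces one to repair.
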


\begin{Theorem}\label{thm: discrete STM, 2nd moment bound, sup}
  Let Assumptions 2.1-2.3 and 2.5 hold. 
  Then, for any $T>0$ and $\delta<\delta^{*}=min\left\{ 1,\frac{1}{2K_{1}\theta},\frac{1}{K_{5}\theta}\right\}$, there exists a constant $C(T)>0$ such that 
  \begin{equation}
    \underset{\delta<\delta^{*}}{sup}\underset{0\leq\tau_{n}\leq T}{sup}\mathbb{E}\left[\left|\tilde{X}_{\tau_{n}}\right|^{2}\right]<C(T). \notag
  \end{equation}
\end{Theorem}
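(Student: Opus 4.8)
The plan is to exploit the implicit one-step structure of \eqref{discrete STM}. Moving the implicit drift to the left-hand side, the scheme reads $f(\tilde X_{\tau_{n+1}})=\tilde X_{\tau_n}+(1-\theta)F(\tau_n,\tilde X_{\tau_n})\delta+\lambda\int_{|z|<c}H(\tau_n,\tilde X_{\tau_n},z)\Pi(dz)\delta+\mathcal M_n$, where $f$ is as in \eqref{f} and $\mathcal M_n:=G(\tau_n,\tilde X_{\tau_n})(B_{(n+1)\delta}-B_{n\delta})+\int_{|z|<c}H(\tau_n,\tilde X_{\tau_n},z)\tilde N(\delta,dz)$ collects the martingale increments. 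Since $\delta<\delta^*$, the one-sided Lipschitz and monotone conditions make \autoref{f(x)=b existence and uniqueness} applicable, so the implicit equation is uniquely solvable and delivers the pointwise bound $|\tilde X_{\tau_{n+1}}|^2\le (|f(\tilde X_{\tau_{n+1}})|^2+2K_1\theta\delta)/(1-2K_1\theta\delta)$. To keep all moments finite while the recursion is derived, I would first work with the stopped iterates $\tilde X_{\tau_{n\wedge\vartheta_R}}$, obtain a bound independent of $R$, and recover the claim by letting $R\to\infty$ via Fatou's lemma.

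The structural observation that makes the estimate tractable is that, although the real-time mesh $\tau_{n+1}-\tau_n$ is random, the scheme advances by the \emph{deterministic} operational step $\delta$. Conditioning on the subordinator path and taking $\mathbb E_B$, the increment $\mathcal M_n$ has zero conditional mean, and by the independence of $B$ and $N$ together with the It\^o isometry for the compensated measure, $\mathbb E_B[|\mathcal M_n|^2\mid\mathcal F_{\tau_n}]=|G(\tau_n,\tilde X_{\tau_n})|^2\delta+\lambda\delta\int_{|z|<c}|H(\tau_n,\tilde X_{\tau_n},z)|^2\Pi(dz)$. Expanding $|f(\tilde X_{\tau_{n+1}})|^2=|P_n|^2+2\langle P_n,\mathcal M_n\rangle+|\mathcal M_n|^2$ with $P_n$ the $\mathcal F_{\tau_n}$-measurable predictable part, the cross term is annihilated in conditional expectation and only $|P_n|^2$ and the isometry term survive.

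Next I would bound $|P_n|^2$. Its inner-product cross term contributes $2(1-\theta)\delta\langle \tilde X_{\tau_n},F(\tau_n,\tilde X_{\tau_n})\rangle$, which together with the surviving diffusion square $\delta|G(\tau_n,\tilde X_{\tau_n})|^2$ is precisely the drift-diffusion combination controlled by \autoref{monotone}; the compensator drift and the jump isometry term are at most quadratic by \autoref{polynomial growth}, since $\int_{|z|<c}|H(t,x,z)|^2\Pi(dz)\le K_0(1+|x|^2)$. Collecting terms yields the affine one-step estimate $\mathbb E_B[|\tilde X_{\tau_{(n+1)\wedge\vartheta_R}}|^2\mid\mathcal F_{\tau_n}]\le(1+C\delta)|\tilde X_{\tau_{n\wedge\vartheta_R}}|^2+C\delta$, with $C$ depending only on $K_0,K_1,\lambda$ (and, through $1/(1-2K_1\theta\delta)$, on $\delta^*$) but not on $n$.

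Finally I would iterate. The affine recursion gives $\mathbb E_B[|\tilde X_{\tau_n}|^2]\le(1+C\delta)^n(1+|x_0|^2)\le e^{Cn\delta}(1+|x_0|^2)$. Because $\tau_n\le T$ is equivalent to $n\le N$ and $N\delta=\tilde E_T\le E_T$ by \eqref{approximates}, this bounds $\sup_{0\le\tau_n\le T}\mathbb E_B|\tilde X_{\tau_n}|^2$ by $e^{CE_T}(1+|x_0|^2)$ along almost every subordinator path; taking $\mathbb E_D$ and invoking the exponential-moment bound of \autoref{Theorem: exponential moments of powers of inverse subordinator E} (finiteness of $\mathbb E[e^{\xi E_T}]$) yields a finite bound, and $R\to\infty$ completes the argument. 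The main obstacle is the interplay between the implicitness and the super-linear growth: one cannot close the estimate by a naive Gronwall argument but must use the monotone condition to absorb the diffusion square against the drift, with the residual square controlled by the implicit ($\theta$-weighted) term of the scheme; the second delicate point is the passage from the deterministic-step conditional recursion to the random number of steps $N$, which must be routed through $N\delta\le E_T$ and the finiteness of $\mathbb E[e^{\xi E_T}]$, and this is exactly where the $\alpha$-dependent control of the time change enters.
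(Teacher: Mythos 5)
There is a genuine gap, and it sits exactly at the step you yourself flagged as the main obstacle. Your claimed one-step estimate $\mathbb{E}_{B}\bigl[\lvert\tilde X_{\tau_{n+1}}\rvert^{2}\mid\mathcal F_{\tau_n}\bigr]\le(1+C\delta)\lvert\tilde X_{\tau_n}\rvert^{2}+C\delta$ with $C$ depending only on $K_0,K_1,\lambda$ does not follow from the stated assumptions. Expanding $\lvert P_n\rvert^{2}$ with $P_n=\tilde X_{\tau_n}+(1-\theta)F(\tau_n,\tilde X_{\tau_n})\delta+\lambda\delta\int_{|z|<c}H\,\Pi(dz)$ produces three terms you cannot control: (i) the square $(1-\theta)^{2}\delta^{2}\lvert F(\tau_n,\tilde X_{\tau_n})\rvert^{2}$, which by \autoref{polynomial growth} is of order $\delta^{2}(1+\lvert\tilde X_{\tau_n}\rvert^{2h})$ and is super-linear when $h>1$; (ii) the drift--diffusion pairing: your expansion yields $2(1-\theta)\delta\langle x,F\rangle+\delta\lvert G\rvert^{2}$, whereas \autoref{monotone} only controls $\langle x,F\rangle+\tfrac{2h-1}{2}\lvert G\rvert^{2}$, so after matching coefficients a term $\delta\lvert G\rvert^{2}\bigl[1-(1-\theta)(2h-1)\bigr]\sim\delta\lvert x\rvert^{2h}$ survives whenever $\theta>(2h-2)/(2h-1)$ (in particular for $\theta=1$); (iii) the compensator square $\delta^{2}\bigl|\lambda\int_{|z|<c}H\,\Pi(dz)\bigr|^{2}$ is \emph{not} "at most quadratic by \autoref{polynomial growth}": Cauchy--Schwarz would require $\Pi(\{|z|<c\})<\infty$, but the paper works with an infinite L\'evy measure, and \autoref{polynomial growth} only bounds $\int\lvert H\rvert^{2}\Pi(dz)$. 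Converting back to $\lvert\tilde X_{\tau_{n+1}}\rvert^{2}$ at every step via \autoref{f(x)=b existence and uniqueness}, as you do, is precisely what destroys the absorption mechanism you invoke: the implicit $\theta$-term can only cancel the super-linear squares if the recursion is run on $y_n:=\lvert f(\tilde X_{\tau_n})\rvert^{2}$ itself, because then $f(\tilde X_{\tau_{n+1}})=f(\tilde X_{\tau_n})+F(\tau_n,\tilde X_{\tau_n})\delta+\lambda\delta\int H\,\Pi(dz)+\mathcal M_n$, and the inner product $\langle f(\tilde X_{\tau_n}),F\rangle=\langle \tilde X_{\tau_n},F\rangle-\theta\delta\lvert F\rvert^{2}$ produces the \emph{full-weight} combination $2\delta\langle \tilde X_{\tau_n},F\rangle+\delta\lvert G\rvert^{2}$ (controlled by \autoref{monotone} since $\tfrac{2h-1}{2}\ge\tfrac12$) together with $(1-2\theta)\delta^{2}\lvert F\rvert^{2}$, which is nonpositive exactly when $\theta\in[1/2,1]$. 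That is the Mao--Szpruch device underlying the proof the paper refers to; \autoref{f(x)=b existence and uniqueness} is then used only to close the recursion in $y_n$ (via $\lvert x\rvert^{2}\le(\lvert f(x)\rvert^{2}+2K_1\theta\delta)/(1-2K_1\theta\delta)$) and to return to $\lvert\tilde X_{\tau_n}\rvert^{2}$ once, at the very end. The fact that your argument never uses $\theta\ge 1/2$ is itself a symptom: for $\theta<1/2$ no such recursion can hold under merely polynomial growth.

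The outer scaffolding of your proposal is sound and coincides with the intended argument: stopping at $\vartheta_R$, taking $\mathbb{E}_B$ pathwise in the subordinator, iterating the (corrected) recursion to get a bound $e^{Cn\delta}(1+\lvert x_0\rvert^{2})$, routing the random number of steps through $n\delta\le N\delta=\tilde E_T\le E_T$ by \eqref{approximates}, and finishing with $\mathbb{E}_D\bigl[e^{CE_T}\bigr]<\infty$ from \autoref{Theorem: exponential moments of powers of inverse subordinator E}. So the error is localized: replace the recursion on $\lvert\tilde X_{\tau_n}\rvert^{2}$ by a recursion on $\lvert f(\tilde X_{\tau_n})\rvert^{2}$, restrict to $\theta\in[1/2,1]$, and add (or cite) a hypothesis giving a linear bound on $\int_{|z|<c}\lvert H(t,x,z)\rvert\,\Pi(dz)$ to handle the compensator drift, since the paper's \autoref{polynomial growth} alone does not provide one.
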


\begin{Lemma}\label{cor: continuous FBEM-STM, pth moment bound, compact}
  Let Assumptions 2.1-2.5 hold, and $\theta\in[1/2,1]$.
  Then, there exists a constant $C(R,\alpha)$ such that for any $T>0$ and $\delta<\delta^{*}=min\left\{ 1,\frac{1}{2K_{1}\theta},\frac{1}{K_{5}\theta}\right\} $,
  \begin{equation}
    \mathbb{E}\left[\left|\hat{X}_{t}-\tilde{X}_{t}\right|\boldsymbol{1}_{[0,\nu_{R}]}(t)\right]
    \leq C(R,\alpha)\delta^{min\{\eta_{F},1/2\}}. \notag
  \end{equation}
\end{Lemma}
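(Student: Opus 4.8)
The plan is to subtract the two continuous interpolations and exploit that they differ only through the drift. Comparing \eqref{continuous FBEM} with \eqref{continuous STM2}, the diffusion integral and the two jump integrals carry the same integrands in both schemes (each frozen at the left grid value), so they cancel, and only the $\theta$-weighting of the drift survives:
\begin{equation}
  \hat X_s-\tilde X_s=\theta\int_0^s\left[F\left(\tau_{n_r},\tilde X_{\tau_{n_r}}\right)-F\left(\tau_{n_r+1},\tilde X_{\tau_{n_r+1}}\right)\right]dE_r. \notag
\end{equation}
Multiplying by $\boldsymbol 1_{[0,\nu_R]}(s)$ confines the state to the ball of radius $R$, so $|\tilde X_{\tau_{n_r}}|\vee|\tilde X_{\tau_{n_r+1}}|\le R$ along the integrand and \autoref{local Lipschitz}, \autoref{polynomial growth} and \autoref{continuity} all apply with $R$-dependent constants (and $\theta\le1$).

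To see where the exponent $\min\{\eta_F,1/2\}$ originates I would split the integrand as
\begin{equation}
  \left[F\left(\tau_{n_r},\tilde X_{\tau_{n_r}}\right)-F\left(\tau_{n_r+1},\tilde X_{\tau_{n_r}}\right)\right]+\left[F\left(\tau_{n_r+1},\tilde X_{\tau_{n_r}}\right)-F\left(\tau_{n_r+1},\tilde X_{\tau_{n_r+1}}\right)\right]. \notag
\end{equation}
The second (space) bracket is at most $C(R)|\tilde X_{\tau_{n_r+1}}-\tilde X_{\tau_{n_r}}|$ by \autoref{local Lipschitz}; reading off the one-step recursion \eqref{discrete STM}, this increment is dominated in $L^1$ by the Brownian martingale term $G(\tau_{n_r},\tilde X_{\tau_{n_r}})(B_{(n_r+1)\delta}-B_{n_r\delta})$, of size $O(\delta^{1/2})$, the drift and compensated-jump contributions being $O(\delta)$ by \autoref{polynomial growth}; this yields the exponent $1/2$. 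The first (time) bracket is at most $K_2(1+R)|\tau_{n_r+1}-\tau_{n_r}|^{\eta_F}$ by \autoref{continuity}, producing the exponent $\eta_F$. Integrating against $dE_r$ uses the nonrandom identity $\int_{\tau_k}^{\tau_{k+1}}dE_r=\delta$ together with $\int_{\tau_{n_s}}^{s}dE_r\le\delta$ (both following from $E_{\tau_k}=k\delta$, since $E$ is the inverse of a strictly increasing stable subordinator), contributing one factor of $\delta$ per step; taking $\mathbb E=\mathbb E_D\mathbb E_B$ and invoking \autoref{Theorem:moment of inverse subordinator E} for the $E$-moments then assembles $\delta^{\min\{\eta_F,1/2\}}$.

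The delicate point is the time bracket. The physical increments $\tau_{n_r+1}-\tau_{n_r}=D_{(n_r+1)\delta}-D_{n_r\delta}$ are $\alpha$-stable subordinator increments whose $\eta_F$-moment is infinite as soon as $\eta_F\ge\alpha$, so a naive $L^1$ estimate of $\int_0^s|\tau_{n_r+1}-\tau_{n_r}|^{\eta_F}dE_r$ does not close. The structure that rescues the argument is that the surviving drift discrepancy is a difference over \emph{consecutive} grid indices weighted by the nonrandom increment $\delta$: the sum telescopes to its two boundary terms, on which the growth bound of \autoref{polynomial growth} gives the uniform control $|F(\tau,x)|\le C(h)(1+R^h)$ for $|x|\le R$, with no jump-size moment entering. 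This telescoping in fact produces the sharper estimate $\mathbb E[|\hat X_t-\tilde X_t|\boldsymbol 1_{[0,\nu_R]}(t)]\le C(R)\delta$, from which the stated bound is immediate since $\delta\le\delta^{\min\{\eta_F,1/2\}}$ for $\delta<1$.
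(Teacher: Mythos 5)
Your reduction of $\hat X_s-\tilde X_s$ to the drift discrepancy is correct (the diffusion and jump integrands are frozen at the same grid data in both interpolations, so they cancel; the mismatch $\tilde X_{r-}$ versus $\tilde X_{\tau_{n_r}-}$ between \eqref{continuous STM2} and \eqref{continuous FBEM} is only a typo relative to \eqref{continuous STM1} and \eqref{discrete FBEM}), and so is your telescoping identity: since $E_{\tau_k}=k\delta$ a.s., every full interval carries the exact weight $\delta$, whence
\begin{equation}
  \hat X_t-\tilde X_t=\theta\delta\left(F(\tau_{0},\tilde X_{\tau_{0}})-F(\tau_{n_t},\tilde X_{\tau_{n_t}})\right)+\theta\left(F(\tau_{n_t},\tilde X_{\tau_{n_t}})-F(\tau_{n_t+1},\tilde X_{\tau_{n_t+1}})\right)\left(E_t-E_{\tau_{n_t}}\right). \notag
\end{equation}
This is a genuinely different route from the paper's: the stated exponent $\min\{\eta_F,1/2\}$ reflects the time-H\"older/space-increment splitting inherited from \cite{jingwei2025}, whereas your telescoping bypasses both the $\eta_F$ and the $1/2$ mechanisms, and at grid points it is rigorous and sharper, giving the pathwise bound $|\hat X_{\tau_n}-\tilde X_{\tau_n}|\boldsymbol{1}_{\{\tau_n\le\nu_R\}}\le C(R)\delta$ with no stable-increment moments involved. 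Your diagnosis that a naive estimate of $\mathbb{E}\left[|\tau_{n_r+1}-\tau_{n_r}|^{\eta_F}\right]$ fails when $\eta_F\ge\alpha$ is also a fair observation.

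The gap is the last, partial interval. The surviving term $\theta F\left(\tau_{n_t+1},\tilde X_{\tau_{n_t+1}}\right)\left(E_t-E_{\tau_{n_t}}\right)$ evaluates the scheme at the \emph{future} grid point $\tau_{n_t+1}>t$, and the event $\{t\le\nu_R\}$ does not force $\left|\tilde X_{\tau_{n_t+1}}\right|\le R$: the stopping time $\nu_R$ only constrains the path up to time $t$, and $\tilde X$ may exit the ball of radius $R$ on $(t,\tau_{n_t+1}]$, e.g. by a jump at $\tau_{n_t+1}$. So your claim that the growth bound applies with $|x|\le R$ to all surviving boundary terms is unjustified for exactly this one — which is the only term requiring real work, since $F(\tau_0,\tilde X_{\tau_0})$ and $F(\tau_{n_t},\tilde X_{\tau_{n_t}})$ are indeed controlled on the event. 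Crude substitutions from the implicit recursion (writing $\theta\delta F(\tau_{n_t+1},\tilde X_{\tau_{n_t+1}})=\tilde X_{\tau_{n_t+1}}-f(\tilde X_{\tau_{n_t+1}})$) only yield a bound of order $R$, not $\delta$. What is needed is $\mathbb{E}\left[\left|F\left(\tau_{n_t+1},\tilde X_{\tau_{n_t+1}}\right)\right|\boldsymbol{1}_{\{t\le\nu_R\}}\right]\le C(R)$, which then multiplies the weight $E_t-E_{\tau_{n_t}}\le\delta$; proving this is where \autoref{f(x)=b existence and uniqueness} enters (on $\{t\le\nu_R\}$ its solvability bound controls $|\tilde X_{\tau_{n_t+1}}|^2$ by $C(R)$ plus the squared one-step Brownian and jump inputs), combined with moment bounds for those inputs — and note that when $h>2$ the polynomial growth of $F$ demands $h$-th moments of the compensated-Poisson increment, which do not follow from \autoref{polynomial growth} alone. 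Your final bound $C(R)\delta\le C(R)\delta^{\min\{\eta_F,1/2\}}$ is correct once this term is supplied, but as written that step is missing.
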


\begin{Theorem} \label{thm: FBEM-STM probability}
  Let Assumptions 2.1-2.5 hold, and $\theta\in[1/2,1]$. 
  Then, for any given $\varepsilon>0$, there exists a positive constant $R_0$ such that for any $R\geq R_0$,
  we can find a positive number $\delta_{0}=\delta_{0}(R)$ such that whenever $\delta <\delta_0$, 
  \begin{equation}
    \mathbb{P}(\nu_{R}\leq T)\leq \frac{\varepsilon}{R^2}, \quad \text{for} \; T>0. \notag
  \end{equation}
\end{Theorem}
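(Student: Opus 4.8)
The plan is to dominate $\mathbb{P}(\nu_{R}\le T)$ by the probability that the \emph{exact} solution escapes a slightly smaller ball plus the probability that the numerical schemes deviate macroscopically from the exact solution, and then to kill the first piece by enlarging $R$ and the second by shrinking $\delta$. Fix a margin $\beta\in(0,1)$. The core of the argument is the set inclusion
\begin{equation}
  \{\nu_{R}\le T\}\subseteq\{\kappa_{(1-\beta)R}\le T\}\cup\Big\{\sup_{0\le t\le\rho_{R}}|\tilde{X}_{t}-X_{t}|>\tfrac{\beta R}{2}\Big\}\cup\Big\{\sup_{0\le t\le\nu_{R}}|\hat{X}_{t}-\tilde{X}_{t}|>\tfrac{\beta R}{2}\Big\}. \notag
\end{equation}
To justify it I work on the complement of the two deviation events. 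If $\kappa_{R}<\nu_{R}\le T$, then trivially $\kappa_{(1-\beta)R}\le\kappa_{R}\le T$. Otherwise $\rho_{R}=\nu_{R}$, and by right-continuity of the càdlàg interpolations at least one of $|\tilde{X}_{\nu_{R}}|,|\hat{X}_{\nu_{R}}|$ is $\ge R$; the triangle inequality $|\hat X_{\nu_R}-X_{\nu_R}|\le|\hat X_{\nu_R}-\tilde X_{\nu_R}|+|\tilde X_{\nu_R}-X_{\nu_R}|$ together with the two threshold bounds forces $|X_{\nu_{R}}|\ge(1-\beta)R$, i.e. $\kappa_{(1-\beta)R}\le\nu_{R}\le T$. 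Thus the three events on the right exhaust $\{\nu_R\le T\}$.

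First I would bound the exact-escape term. By the probability estimate in \autoref{thm: exact pth moment, probability} applied at radius $(1-\beta)R$,
\begin{equation}
  \mathbb{P}(\kappa_{(1-\beta)R}\le T)\le\frac{2^{h-1}}{((1-\beta)R)^{2h}}\sum_{k=0}^{\infty}\frac{\big((2hK_{1}+\lambda K_{0})T^{\alpha}\big)^{k}}{\Gamma(\alpha k+1)}\big(1+\mathbb{E}|x_{0}|^{2h}\big)=\frac{C}{R^{2h}}. \notag
\end{equation}
Since the exact solution enjoys the $2h$-th moment bound with $2h>2$ (here I use the superlinear regime $h>1$), we have $R^{2}\,\mathbb{P}(\kappa_{(1-\beta)R}\le T)\le C R^{2-2h}\to0$ as $R\to\infty$. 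Hence I can choose $R_{0}=R_{0}(\varepsilon,\beta)$ so that for every $R\ge R_{0}$ this term is at most $\varepsilon/(2R^{2})$.

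Next I would treat the two deviation terms by Markov's inequality. The $\hat{X}$-versus-$\tilde{X}$ term is controlled directly by (the supremum form of) \autoref{cor: continuous FBEM-STM, pth moment bound, compact}, giving $\mathbb{P}(\sup_{t\le\nu_R}|\hat X_t-\tilde X_t|>\beta R/2)\le\frac{2}{\beta R}C(R,\alpha)\,\delta^{\min\{\eta_{F},1/2\}}$. For the $\tilde{X}$-versus-$X$ term I would establish the companion strong-error bound on the stopped interval, $\mathbb{E}\big[\sup_{t\le\rho_{R}}|\tilde{X}_{t}-X_{t}|\big]\le C(R)\,\delta^{\gamma}$ with $\gamma=\min\{\eta_{F},\eta_{G},\eta_{H},\alpha/2\}$: on $[0,\rho_{R}]$ both processes are confined to the ball of radius $R$, so \autoref{local Lipschitz} supplies a global Lipschitz constant $C(R)$, after which Kunita's and Burkholder--Davis--Gundy inequalities for the time-changed jump and diffusion integrals, the inverse-subordinator moment bounds of \autoref{Theorem:moment of inverse subordinator E}, the time-regularity in \autoref{exact any two difference}, and a Gronwall argument yield the estimate exactly as in \autoref{cor: continuous FBEM-STM, pth moment bound, compact}. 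Markov then gives $\frac{2}{\beta R}C(R)\,\delta^{\gamma}$. For each fixed $R\ge R_{0}$ both contributions tend to $0$ as $\delta\to0$, so I may pick $\delta_{0}=\delta_{0}(R)$ with their sum $\le\varepsilon/(2R^{2})$ whenever $\delta<\delta_{0}$. Combining the three bounds yields $\mathbb{P}(\nu_{R}\le T)\le\varepsilon/R^{2}$.

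The hard part will be two-fold. The conceptual obstacle is the \emph{single-radius gap}: escape of the numerical solution at radius $R$ while the exact solution is merely confined to radius $R$ carries no quantitative deviation by itself; this is what the margin device $(1-\beta)R$ repairs, converting numerical escape into a genuine $O(R)$ separation that Markov can exploit. The technical obstacle is establishing the stopped-interval strong-error estimate for $\tilde{X}-X$ used above, since it must handle the time-changed compensated Poisson integral and the $\theta$-implicitness (via \autoref{f(x)=b existence and uniqueness}) uniformly on $[0,\rho_{R}]$; this parallels the proof of \autoref{cor: continuous FBEM-STM, pth moment bound, compact} but is the heaviest input, and one must ensure it is logically prior to (not dependent on) the present escape estimate to avoid circularity.
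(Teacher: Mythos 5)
Your decomposition is genuinely different from the paper's proof, which never brings the exact solution $X$ into play: the paper applies the time-changed It\^o formula directly to $|\hat{X}_{t\wedge\nu_{R}}|^{2}$, uses \autoref{monotone} for the drift part, the bound $|F(\tau_{n_{s}},\tilde{X}_{\tau_{n_{s}}})|\leq C(R)$ from \autoref{local Lipschitz} together with \autoref{thm: discrete STM, 2nd moment bound, sup} for the cross term $\langle\hat{X}-\tilde{X},F\rangle$, controls the resulting $|\hat{X}-\tilde{X}|$ integrals by \autoref{cor: continuous FBEM-STM, pth moment bound, compact}, and closes with a Gronwall-type inequality and a Chebyshev argument. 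Your Higham--Mao--Stuart-type reduction to the escape of $X$ from the ball of radius $(1-\beta)R$ plus two deviation events is a legitimate alternative strategy, and you are right that it is not intrinsically circular: a stopped strong-error estimate needs only the local Lipschitz constant on the ball of radius $R$ and Gronwall, not the present theorem.

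However, two concrete gaps remain. First, the hypothesis $h>1$ that you slip in is not part of the statement: the theorem assumes only Assumptions 2.1--2.5, under which $h=1$ is admissible, and then \autoref{thm: exact pth moment, probability} yields only $\mathbb{P}(\kappa_{(1-\beta)R}\leq T)\leq C/R^{2}$ with a fixed constant $C$; no choice of $R_{0}$ or $\delta_{0}(R)$ makes $C/R^{2}\leq\varepsilon/(2R^{2})$, so in the borderline case your first term alone defeats the claimed $\varepsilon/R^{2}$ bound. The paper's route does not invoke the exact solution's moments at all and hence does not hinge on $h>1$. Second, both of your Markov steps require supremum-in-time estimates that the paper does not supply at this point: \autoref{cor: continuous FBEM-STM, pth moment bound, compact} is a pointwise-in-$t$ bound on $\mathbb{E}\left[|\hat{X}_{t}-\tilde{X}_{t}|\mathbf{1}_{[0,\nu_{R}]}(t)\right]$, not a bound on $\mathbb{E}\left[\sup_{t\leq\nu_{R}}|\hat{X}_{t}-\tilde{X}_{t}|\right]$, and the companion bound $\mathbb{E}\left[\sup_{t\leq\rho_{R}}|\tilde{X}_{t}-X_{t}|\right]\leq C(R)\delta^{\gamma}$ exists nowhere before \autoref{thm: convergence}. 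You would have to prove both (via Burkholder--Davis--Gundy for the time-changed diffusion and compensated-Poisson integrals), which amounts to importing the heaviest part of the proof of \autoref{thm: convergence} into this lemma; in the paper's architecture the present theorem is a lightweight input to \autoref{thm: convergence}, not a consumer of its machinery.
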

\begin{proof}
  By the Itô's formula for time-changed Lévy noise (Lemma 2.6 of \cite{nane2018}) and \autoref{monotone}, we have
  \begin{align}
    \left|\hat{X}_{t\wedge\nu_{R}}\right|^{2}
    &=\left|x_{0}\right|^{2}+2K_{1}\int_{0}^{\tau_{n_{t}}\wedge\nu_{R}}\left(1+\left|\tilde{X}_{\tau_{n_{s}}}\right|^{2}\right)dE_{s}+2\int_{0}^{\tau_{n_{t}}\wedge\nu_{R}}\left\langle \hat{X}_{\tau_{n_{s}}}-\tilde{X}_{\tau_{n_{s}}},F\left(\tau_{n_{s}},\tilde{X}_{\tau_{n_{s}}}\right)\right\rangle dE_{s} \notag \\
    &\quad +2\int_{0}^{\tau_{n_{t}}\wedge\nu_{R}}\left\langle \hat{X}_{\tau_{n_{s}}},G\left(\tau_{n_{s}},\tilde{X}_{\tau_{n_{s}}}\right)\right\rangle dB_{E_{s}}+\lambda K_{1}\int_{0}^{\tau_{n_{t}}\wedge\nu_{R}}\left(1+\left|\hat{X}_{\tau_{n_{s}}}\right|^{2}\right)dE_{s} \notag \\
    &\quad +\int_{0}^{\tau_{n_{t}}\wedge\nu_{R}}\int_{|z|<c}\left[\left|\hat{X}_{\tau_{n_{s}}-}+H\left(\tau_{n_{s}},\hat{X}_{\tau_{n_{s}}-},z\right)\right|^{2}-\left|\hat{X}_{\tau_{n_{s}-}}\right|^{2}\right]\tilde{N}(dE_{s},dz) \notag 
  \end{align}
  Taking $\mathbb{E}_{B}$ on both sides leads to
  \begin{align}
    \mathbb{E}_{B}\left[\left|\hat{X}_{t\wedge\nu_{R}}\right|^{2}\right]
    &\leq \left|x_{0}\right|^{2}+2K_{1}\mathbb{E}_{B}\left[\int_{0}^{\tau_{n_{t}}\wedge\nu_{R}}\left(1+\left|\tilde{X}_{\tau_{n_{s}}}\right|^{2}\right)dE_{s}\right] \notag \\
    &\quad +2\mathbb{E}_{B}\left[\int_{0}^{\tau_{n_{s}}\wedge\nu_{R}}\left|\hat{X}_{\tau_{n_{s}}}-\tilde{X}_{\tau_{n_{s}}}\right|\cdot\left|F\left(\tau_{n_{s}},\tilde{X}_{\tau_{n_{s}}}\right)\right|dE_{s}\right] \notag \\
    &\quad +\lambda K_{1}\mathbb{E}_{B}\left[\int_{0}^{\tau_{n_{t}}\wedge\nu_{R}}\left(1+\left|\hat{X}_{\tau_{n_{s}}}\right|^{2}\right)dE_{s}\right]. \notag
  \end{align}
  Using \autoref{local Lipschitz}, \autoref{thm: discrete STM, 2nd moment bound, sup}, and $F(t,0)=0$ for $t\in[0,\nu_{R}]$, we have
  \begin{equation}
    \left|F\left(\tau_{n_{s}},\tilde{X}_{\tau_{n_{s}}}\right)\right|
    \leq \left|F\left(\tau_{n_{s}},\tilde{X}_{\tau_{n_{s}}}\right)-F\left(\tau_{n_{s}},\tilde{X}_{0}\right)\right|+\left|F\left(\tau_{n_{s}},\tilde{X}_{0}\right)\right|
    \leq C(R). \label{4.2}
  \end{equation}
  Taking $\mathbb{E}_{D}$ on both sides, using \eqref{4.2} and $(a-b+c)^{2}\leq2(|a-b|^{2}+|c|^{2})$, we obtain
  \begin{align}
    \mathbb{E}\left[\left|\hat{X}_{t\wedge\nu_{R}}\right|^{2}\right]
    &\leq\left|x_{0}\right|^{2}+(2+\lambda)K_{1}E_{T}+4K_{1}\mathbb{E}\left[\int_{0}^{\tau_{n_{t}}\wedge\nu_{R}}\left|\tilde{X}_{\tau_{n_{s}}}-\hat{X}_{\tau_{n_{s}}}\right|^{2}dE_{s}\right] \notag \\
    &\quad +(4+\lambda)K_{1}\mathbb{E}\left[\int_{0}^{\tau_{n_{t}}\wedge\nu_{R}}\left|\hat{X}_{\tau_{n_{s}}}\right|^{2}dE_{s}\right]+C(R)\mathbb{E}\left[\int_{0}^{\tau_{n_{s}}\wedge\nu_{R}}\left|\hat{X}_{\tau_{n_{s}}}-\tilde{X}_{\tau_{n_{s}}}\right|dE_{s}\right]. \notag
  \end{align}
  The rest of the proof is similar to the proof of Theorem 4.2 in \cite{jingwei2025}.
\end{proof}

\begin{Theorem}\label{thm: convergence}
  Let Assumptions 2.1-2.5 hold.
  Assume $\theta\in[1/2,1]$, $R=\delta^{-1/2}$ and $\alpha\in(1/2,1)$.
  Then, there exist a constant $C(R,\alpha, \lambda)\in (0,\infty)$ such that
  \begin{equation}
    \mathbb{E}\left[\underset{0\leq t\leq T}{sup}\left|\tilde{X}_{t}-X_{t}\right|\right]\leq C(R,\alpha,\lambda)\delta^{min\{\eta_{F},\eta_{G},\eta_{H},\alpha/2\}}. \notag
  \end{equation}
\end{Theorem}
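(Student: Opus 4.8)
The plan is to follow the localization strategy of Higham--Mao--Stuart, adapted to the time-changed setting, using the Forward--Backward Euler--Maruyama scheme $\hat X$ of \eqref{continuous FBEM} as an intermediary. First I would write $\tilde X_t - X_t = (\tilde X_t - \hat X_t) + (\hat X_t - X_t)$ and split the expectation according to the stopping time $\rho_R = \kappa_R \wedge \nu_R$:
\[
\mathbb{E}\Big[\sup_{0\le t\le T}|\tilde X_t - X_t|\Big] = \mathbb{E}\Big[\sup_{t}|\tilde X_t - X_t|\,\mathbf 1_{\{\rho_R > T\}}\Big] + \mathbb{E}\Big[\sup_{t}|\tilde X_t - X_t|\,\mathbf 1_{\{\rho_R \le T\}}\Big].
\]
On the tail event I would use the Cauchy--Schwarz inequality together with the uniform second moments of $X$ (\autoref{thm: exact pth moment, probability}) and of $\tilde X$ (\autoref{thm: discrete STM, 2nd moment bound, sup}), and the probability estimates $\mathbb{P}(\kappa_R \le T) \le C/R^{2h}$ (\autoref{thm: exact pth moment, probability}) and $\mathbb{P}(\nu_R \le T)\le \varepsilon/R^2$ (\autoref{thm: FBEM-STM probability}); with $R=\delta^{-1/2}$ this produces a contribution of order $\delta^{1/2}\le \delta^{\alpha/2}$, which is absorbed into the claimed rate since $\alpha\in(1/2,1)$.

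On the bounded event the first summand $\mathbb{E}[\sup_t |\tilde X_t - \hat X_t|\mathbf 1_{[0,\nu_R]}]$ is controlled directly by \autoref{cor: continuous FBEM-STM, pth moment bound, compact}, giving $C(R,\alpha)\delta^{\min\{\eta_F,1/2\}}$; this is where the implicitness of the $\theta$-stage is absorbed, relying on the solvability and a priori bound for $f(x)=b$ from \autoref{f(x)=b existence and uniqueness} and the choice $\theta\in[1/2,1]$. The core is the second summand $\mathbb{E}[\sup_t|\hat X_{t}-X_{t}|\mathbf 1_{\{t\le\rho_R\}}]$. Subtracting \eqref{continuous FBEM} from \eqref{SDE}, applying the Itô formula for time-changed Lévy noise to $|\hat X_{t\wedge\rho_R} - X_{t\wedge\rho_R}|^2$, and taking $\mathbb{E}_B$ (annihilating the $dB_{E}$ and $\tilde N$ martingales), I would decompose each coefficient difference as
\[
F(\tau_{n_r},\tilde X_{\tau_{n_r}}) - F(r,X_{r-}) = \big[F(\tau_{n_r},\tilde X_{\tau_{n_r}}) - F(\tau_{n_r},X_{\tau_{n_r}})\big] + \big[F(\tau_{n_r},X_{\tau_{n_r}}) - F(\tau_{n_r},X_{r-})\big] + \big[F(\tau_{n_r},X_{r-}) - F(r,X_{r-})\big],
\]
and analogously for $G$ and $H$. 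On $\{t\le\rho_R\}$ the first bracket is $\le C(R)|\tilde X_{\tau_{n_r}} - X_{\tau_{n_r}}|$ by \autoref{local Lipschitz}, which I would further split as $|\tilde X_{\tau_{n_r}} - \hat X_{\tau_{n_r}}| + |\hat X_{\tau_{n_r}} - X_{\tau_{n_r}}|$ so the first piece feeds \autoref{cor: continuous FBEM-STM, pth moment bound, compact} and the second becomes the Gronwall unknown; the second bracket, a spatial increment of the exact solution frozen over one grid cell, is handled by \autoref{local Lipschitz} and the modulus estimate \autoref{exact any two difference}; the third bracket is the genuine time-discretization error, controlled by \autoref{continuity}.

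To extract the exponents I would keep the time-Hölder brackets pathwise: summing $|\tau_{n_r} - r|^{\eta}$ over grid cells and applying concavity (Jensen) with $\sum_n (\tau_{n+1}-\tau_n) = \tau_N \le T$ and $N \asymp E_T/\delta$ yields $\sum_n(\tau_{n+1}-\tau_n)^{\eta}\,\delta \le E_T^{1-\eta}T^{\eta}\delta^{\eta}$, producing the factors $\delta^{\eta_F},\delta^{\eta_G},\delta^{\eta_H}$ without incurring the infinite high-order moments of the stable increments $D_\delta$; the residual powers of $E_T$ are integrable by \autoref{Theorem:moment of inverse subordinator E}. The $\delta^{\alpha/2}$ contribution is the slowest rate produced by the time change: the solution-modulus and state-freezing brackets are controlled conditionally by $(E_r - E_{\tau_{n_r}})^{1/2}\le \delta^{1/2}$ through \autoref{exact any two difference}, and after $\mathbb{E}_D$ and \autoref{Theorem:moment of inverse subordinator E} the physical-time Hölder modulus $|t-s|^{\alpha/2}$ of $X$, together with $R=\delta^{-1/2}$, fixes $\alpha/2$ as the binding time-change exponent. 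Collecting the $dE_r$-integrals and invoking the Gronwall-type inequality used in \autoref{thm: exact pth moment, probability} then closes the estimate, the exponential factor $\mathbb{E}[e^{cE_T}]$ being finite by \autoref{Theorem: exponential moments of powers of inverse subordinator E}.

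The main obstacle, requiring the most care, is the heavy-tailed nature of the $\alpha$-stable increments $D_\delta\overset{d}{=}\delta^{1/\alpha}D_1$: any estimate passing through $\mathbb{E}[(\tau_{n+1}-\tau_n)^{p}]$ with $p\ge\alpha$ diverges, so the time-Hölder errors must be handled by the pathwise concavity bound above rather than term by term, and the physical/operational bookkeeping (via $E_{\tau_n}=n\delta$, $E_r - E_{\tau_{n_r}}\le\delta$, and \eqref{approximates}) must be tracked carefully. A secondary difficulty is following the dependence of the local-Lipschitz constant $C(R)$ under the coupling $R=\delta^{-1/2}$, so that after balancing the bounded-event rate $\delta^{\min\{\eta_F,\eta_G,\eta_H,\alpha/2\}}$ against the tail rate $\delta^{1/2}$ the product collapses to the stated order; this is where $\alpha\in(1/2,1)$ and $\theta\in[1/2,1]$ are used, the latter, through \autoref{monotone}, \autoref{one-sided Lipschitz} and \autoref{f(x)=b existence and uniqueness}, guaranteeing that the implicit stage is well posed and nonexpansive.
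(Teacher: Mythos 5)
Your proposal is correct and takes essentially the same approach as the paper: Higham--Mao--Stuart-style localization with $\rho_R=\kappa_R\wedge\nu_R$, the FBEM process \eqref{continuous FBEM} as intermediary, the tail event handled via \autoref{thm: exact pth moment, probability}, \autoref{thm: discrete STM, 2nd moment bound, sup} and \autoref{thm: FBEM-STM probability}, the STM--FBEM gap via \autoref{cor: continuous FBEM-STM, pth moment bound, compact}, and the FBEM--exact error via the same three-way coefficient splitting (local Lipschitz term, solution modulus via \autoref{exact any two difference}, time-H\"older term via \autoref{continuity}), closed by the Gronwall-type inequality after taking $\mathbb{E}_B$ and then $\mathbb{E}_D$ with the moment theorems. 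If anything, your sketch is more explicit than the paper's own proof (e.g.\ the concavity bound for the heavy-tailed stable increments), since the paper delegates the terms $I_1,I_3,I_4,I_5$ and the overall framework to Theorem 4.3 of the cited prior work and writes out only the jump contributions $I_{23}$ and $I_{24}$.
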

\begin{proof}
  The proof technique follows the established framework presented in Theorem 4.3 in \cite{jingwei2025}. The estimate for $I_2$ now turns into
  \begin{align}
    \left|\hat{X}_{s\wedge\rho_{R}}-X_{s\wedge\rho_{R}}\right|
    &\leq \left|\int_{0}^{s\wedge\rho_{R}}\left(F\left(\tau_{n_{r}},\tilde{X}_{\tau_{n_{r}}}\right)-F\left(r,X_{r}\right)\right)dE_{r}\right|+\left|\int_{0}^{s\wedge\rho_{R}}\left(G\left(\tau_{n_{r}},\tilde{X}_{\tau_{n_{r}}}\right)-G\left(r,X_{r}\right)\right)dB_{E_{r}}\right| \notag \\
    &\quad +\left|\int_{0}^{s\wedge\rho_{R}}\int_{|z|<c}\left(H(\tau_{n_{r}},\tilde{X}_{\tau_{n_{r}}-},z)-H(r,X_{r-},z)\right)\tilde{N}(dE_{s},dz)\right| \notag \\
    &\quad +\lambda \left|\int_{0}^{s\wedge\rho_{R}}\int_{|z|<c}\left(H(\tau_{n_{r}},\tilde{X}_{\tau_{n_{r}}-},z)-H(r,X_{r-},z)\right)\Pi(dz)dE_{r}\right| \notag \\
    &\leq I_{21}+I_{22}+I_{23}+I_{24}.
  \end{align}
  where 
  \begin{equation}
    I_{21}:=\underset{0\leq s\leq t}{sup}\left|\int_{0}^{s\wedge\rho_{R}}\left(F\left(\tau_{n_{r}},\tilde{X}_{\tau_{n_{r}}}\right)-F\left(r,X_{r}\right)\right)dE_{r}\right|,\notag
  \end{equation}
  \begin{equation}
    I_{22}:=\underset{0\leq s\leq t}{sup}\left|\int_{0}^{s\wedge\rho_{R}}\left(G\left(\tau_{n_{r}},\tilde{X}_{\tau_{n_{r}}}\right)-G\left(r,X_{r}\right)\right)dB_{E_{r}}\right|, \notag
  \end{equation}
  \begin{equation}
    I_{23}:=\underset{0\leq s\leq t}{sup}\left|\int_{0}^{s\wedge\rho_{R}}\int_{|z|<c}\left(H(\tau_{n_{r}},\tilde{X}_{\tau_{n_{r}}-},z)-H(r,X_{r-},z)\right)\tilde{N}(dE_{s},dz)\right|,  \notag
  \end{equation}
  and 
  \begin{equation}
    I_{24}:\underset{0\leq s\leq t}{sup}\left|\lambda \int_{0}^{s\wedge\rho_{R}}\int_{|z|<c}\left(H(\tau_{n_{r}},\tilde{X}_{\tau_{n_{r}}-},z)-H(r,X_{r-},z)\right)\Pi(dz)dE_{r}\right|.\notag
  \end{equation}
  
  For $I_{23}$, it is easy to see that $\mathbb{E}_{B}\left[I_{23}^{2}\right]=0$, since $B$ and $N$ are independent. 
  For $I_{24}$, taking $\mathbb{E}_{B}$ on both sides,
  applying the Burkholder-Davis-Gundy inequality, \autoref{local Lipschitz} and \autoref{continuity}, we have
  \begin{align}
    \mathbb{E}_{B}\left[I_{24}^{2}\right]
    &\leq \mathbb{E}_{B}\left[\underset{0\leq s\leq t}{sup}\left|\lambda\int_{0}^{s\wedge\rho_{R}}\int_{|z|<c}\left(H\left(\tau_{n_{r}},\tilde{X}_{\tau_{n_{r}}-},z\right)-H\left(r,X_{r-},z\right)\right)\Pi(dz)dE_{r}\right|^{2}\right] \notag \\
    &\leq 3b_{2}\lambda^{2}\int_{0}^{t\wedge\rho_{R}}\int_{|z|<c}\mathbb{E}_{B}\left[\left|H\left(\tau_{n_{r}},\tilde{X}_{\tau_{n_{r}}-},z\right)-H\left(\tau_{n_{r}},X_{\tau_{n_{r}}-},z\right)\right|^{2}\right]\Pi(dz)dE_{r} \notag \\
    &\quad +3b_{2}\lambda^{2}\int_{0}^{t\wedge\rho_{R}}\int_{|z|<c}\mathbb{E}_{B}\left[\left|H\left(\tau_{n_{r}},X_{\tau_{n_{r}}-},z\right)-H\left(\tau_{n_{r}},X_{r-},z\right)\right|^{2}\right]\Pi(dz)dE_{r} \notag \\
    &\quad +3b_{2}\lambda^{2}\int_{0}^{t\wedge\rho_{R}}\int_{|z|<c}\mathbb{E}_{B}\left[\left|H\left(\tau_{n_{r}},X_{r-},z\right)-H\left(r,X_{r-},z\right)\right|^{2}\right]\Pi(dz)dE_{r} \notag \\
    &\leq 3b_{2}\lambda^{2}C(R)\int_{0}^{t\wedge\rho_{R}}\mathbb{E}_{B}\left[\left|\tilde{X}_{\tau_{n_{r}}}-X_{\tau_{n_{r}}}\right|^{2}\right]dE_{r}+3b_{2}\lambda^{2}C(R)\int_{0}^{t\wedge\rho_{R}}\mathbb{E}_{B}\left[\left|X_{\tau_{n_{r}}}-X_{r}\right|^{2}\right]dE_{r} \notag \\
    &\quad +3b_{2}K_{4}^{2}\lambda^{2}\int_{0}^{t\wedge\rho_{R}}\mathbb{E}_{B}\left[\left(1+\left|X_{r-}\right|\right)^{2}\left|\tau_{n_{r}}-r\right|^{2\eta_{H}}\right]dE_{r} \notag \\
    &\leq C(R,\lambda)\int_{0}^{t\wedge\rho_{R}}\mathbb{E}_{B}\left[Z_{r}^{2}\right]dE_{r}+C(R,\lambda)E_{T}\mathbb{E}_{B}[Y_{T}^{(2h)}]\delta^{min\{2\eta_{H},\alpha\}}.
  \end{align}
  Applying the Gronwall-type inequality (Lemma 6.3 in Chapter IX.6a of [37]), taking $\mathbb{E}_D$, and Theorem 3.1-3.3 gives
  \begin{equation}
    \mathbb{E}\left[I_{2}\right]
    \leq C(R,\alpha,\lambda)\delta^{min\{\eta_{F},\eta_{G},\eta_{H},\alpha/2\}}.
  \end{equation}
  Putting together $I_1, I_2, I_3, I_4$ and $I_5$ we obtain the desired result.
\end{proof}


\bibliographystyle{elsarticle-num}  
\bibliography{main}

\end{document}
